\documentclass[12pt]{article}

\usepackage{amsthm}

\usepackage[left=1cm,right=1cm,top=0.5cm,bottom=1.5cm,bindingoffset=0cm]{geometry}

\usepackage[T2A]{fontenc}
\usepackage[utf8]{inputenc}
\usepackage[english]{babel}
\usepackage{amsmath}
\usepackage{amsfonts}
\usepackage{mathtools}
\usepackage{graphicx}
\usepackage{systeme}
\usepackage{blindtext}
\usepackage{amssymb}
\usepackage{hyperref}
\usepackage{csquotes}
\usepackage{bbm}
\usepackage[
backend=biber,
style=numeric,
sorting=none
]{biblatex}
\usepackage[normalem]{ulem}
\graphicspath{ {./} }

\newtheorem{theorem}{Theorem}
\newtheorem{corollary}{Corollary}[section]
\newtheorem{lemma}{Lemma}[section]
\newtheorem*{lemma*}{Lemma}
\newtheorem*{den*}{Notation}

\newtheorem*{remark*}{Remark}

\newtheorem{proposition}[theorem]{Proposition}

\theoremstyle{remark}

\theoremstyle{definition}
\newtheorem{definition}{Definition}[section]

\newcommand{\mP}{\mathbf{P}}
\newcommand{\mQ}{\mathbf{Q}}
\newcommand{\mE}{\mathbf{E}}

\newcommand{\lst}{\mathrm{list}}

\newcommand{\ind}[1]{\mathbbm{1}\left\{#1\right\}}
\newcommand{\skob}[1]{\left(#1\right)}
\newcommand{\fig}[1]{\left\{#1\right\}}

\newcommand{\flr}[1]{\left\lfloor#1\right\rfloor}

\title{List chromatic number of $G(n,r,s)$ graphs}
\date{}
\author{M. M. Koshelev}
\begin{document}
\maketitle

\begin{abstract}
    We study the list chromatic number of generalized Johnson graphs. Recall that the vertices of such graphs are all possible $r$-element subsets of $[n] = \{1, \dotsc, n\}$ and an edge is drawn between two vertices if and only if corresponding sets have exactly $s$ common elements. We prove that for any fixed $s \geq 1$ and $r \geq 2s + 2$ we have $\chi_L(G(n,r,s)) = \Theta(n^{s+1}\ln n)$. In case where $s = 0$ we prove that for all $4\ln n \leq r \leq n/20$ we have $\chi_L(G(n,r,0)) \geq \frac{n}{120}\ln\skob{\frac{en}{r}}$. Together with previous results of Bulankina, Inozemtsev and Kupavskii this proves that $\chi_L(G(n,r,0)) = \Theta\skob{n\ln\skob{\frac{en}{r}}}$ uniformly for $2 \leq r \leq (1/2-\varepsilon)n$.
\end{abstract}

\section{Introduction}

In this paper we study list colorings of the graphs $G(n,r,s)$. For integers $n > r > s \geq 0$, let $[n]=\{1,\ldots,n\}$ and let $\binom{[n]}{r}$ denote the family of all $r$-element subsets of $[n]$. The graph $G(n,r,s)$ has vertex set $\binom{[n]}{r}$, with two distinct vertices $A$ and $B$ adjacent if and only if $|A\cap B|=s$. This family includes the Kneser graphs $KG(n,r)=G(n,r,0)$ and the Johnson graphs $J(n,r)=G(n,r,r-1)$. Independent sets in $G(n,r,s)$ are precisely uniform set systems with one forbidden intersection size, which connects these graphs to extremal set theory~\cite{FrFu}. Their realizations as Euclidean distance graphs also make them useful in combinatorial geometry~\cite{FrW}. Other applications of these graphs include coding and Ramsey theory (see \cite{Codes, Codes2, Spectrum}). A lot of literature studies various properties of these graphs (see, for example, \cite{RaiMeta,MSp,Shu1,Shu2}).

A substantial part of this research concerns coloring parameters.
For example, the classical result of Lov\'asz~\cite{Lovasz1978} determines the chromatic number for Kneser graphs.
\begin{theorem}\label{LovKG}
    Let $n \geq 2r$. Then
    $$
        \chi(G(n,r,0)) = n-2r+2.
    $$
\end{theorem}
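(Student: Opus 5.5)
The statement splits into an upper bound $\chi(G(n,r,0)) \le n-2r+2$ and a lower bound $\chi(G(n,r,0)) \ge n-2r+2$. I will prove each separately. The upper bound is an explicit coloring. The lower bound follows Greene's version of the topological argument, which rests on the Borsuk--Ulam theorem in its Lyusternik--Schnirelmann--Borsuk form.

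For the upper bound, for a vertex $A \in \binom{[n]}{r}$ put $c(A) = \min(\min A,\, n-2r+2)$.

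\begin{itemize}
\item If $c(A)=c(B)=i < n-2r+2$, then $i \in A\cap B$, so $A$ and $B$ are not adjacent.
\item If $c(A)=c(B)=n-2r+2$, then $A,B \subseteq \{n-2r+2,\dots,n\}$. This set has $2r-1$ elements, so $A$ and $B$ must intersect.
\end{itemize}

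Hence every color class is intersecting, i.e.\ independent in $G(n,r,0)$, and $n-2r+2$ colors suffice.

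For the lower bound, set $d = n-2r+1$ and suppose there is a proper coloring with $d$ colors $1,\dots,d$. Place $n$ points on the sphere $S^{d} \subset \mathbb{R}^{d+1}$ in general position, meaning no $d+1$ of them lie on a hyperplane through the origin. Identify $[n]$ with these points. For $x \in S^d$ let $H(x)$ be the open hemisphere centered at $x$.

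\begin{itemize}
\item For $i=1,\dots,d$, let $U_i$ be the set of $x$ such that $H(x)$ contains some $r$-set of color $i$. Each $U_i$ is open.
\item Let $F = S^d \setminus (U_1\cup\dots\cup U_d)$, which is closed.
\end{itemize}

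By Lyusternik--Schnirelmann--Borsuk, if $S^d$ is covered by $d+1$ sets, each open or closed, then one of them contains a pair of antipodal points. There are two cases.

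\begin{itemize}
\item If $x,-x \in U_i$, then the disjoint hemispheres $H(x)$ and $H(-x)$ contain two disjoint $r$-sets of the same color $i$. These sets are adjacent, which contradicts properness.
\item If $x,-x \in F$, then each of $H(x)$ and $H(-x)$ contains at most $r-1$ of the points. So at least $n-2(r-1) = d+1$ points lie on the great sphere $x^\perp$. This contradicts general position.
\end{itemize}

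Both cases are impossible, so $\chi \ge d+1 = n-2r+2$.

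The main obstacle is the topological input. I will cite the Lyusternik--Schnirelmann--Borsuk theorem in Greene's version, which allows a mixture of open and closed sets. Given that, the step needing care is verifying that the sets $U_i$ are open: if $H(x)$ contains finitely many given points, then so does $H(y)$ for all $y$ near $x$. The rest is routine.
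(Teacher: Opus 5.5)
The paper does not prove this statement; it quotes it from Lov\'asz~\cite{Lovasz1978}. Lov\'asz's original argument goes through the neighborhood complex $\mathcal{N}(G)$: one shows $\chi(G)\ge \mathrm{conn}(\mathcal{N}(G))+3$ for every graph, and then that $\mathcal{N}(G(n,r,0))$ is $(n-2r-1)$-connected.

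Your proposal is correct, and it is essentially Greene's short proof rather than Lov\'asz's. The coloring $c(A)=\min(\min A,\,n-2r+2)$ is the standard extremal one. Every step of your lower bound checks out:
\begin{itemize}
\item each $U_i$ is a union of finite intersections of the open conditions $\langle x,p_a\rangle>0$, so it is open;
\item an antipodal pair in some $U_i$ gives two disjoint $r$-sets of color $i$, contradicting properness;
\item if $x,-x\in F$, then each of $H(x)$ and $H(-x)$ contains at most $r-1$ of the points, which leaves $n-2r+2=d+1$ points on the hyperplane $x^{\perp}$, contradicting general position.
\end{itemize}

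Your route needs only Borsuk--Ulam-level topology and no homotopy-connectivity computation. Lov\'asz's route is heavier, but it yields a lower bound valid for all graphs, of which the Kneser case is one instance.

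The two ingredients you leave implicit are both one-liners.
\begin{itemize}
\item Points in the required general position exist. Normalize $(1,t_j,t_j^2,\dots,t_j^{d})$ for distinct reals $t_1,\dots,t_n$; any $d+1$ of these vectors are linearly independent by Vandermonde.
\item The mixed open/closed form of Lyusternik--Schnirelmann--Borsuk follows from the all-open form. An antipode-free closed set $F$ is compact and disjoint from $-F$, so it lies at some positive distance $2\eta$ from $-F$. Its open $\eta$-neighborhood is then still antipode-free and can replace $F$ in the cover.
\end{itemize}
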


For general $r$ and $s$, chromatic numbers have been investigated using methods from extremal set theory, algebra, and design theory; see, for example,~\cite{RaiMetaAlphaChrom,FrW,ZChrom}.
Other coloring parameters have also attracted attention. In particular, the clique-chromatic number is the minimum number of colors in a vertex coloring with no monochromatic inclusion-maximal clique of size at least two. Bounds on this parameter for several families of $G(n,r,s)$ graphs were obtained in~\cite{Rai1,DAM,ClChr2}.

Here we focus on the \emph{list chromatic number}, also known as the \emph{choice number}. List coloring was introduced independently by Vizing~\cite{Vizing1976} and Erd\H{o}s, Rubin, and Taylor~\cite{ERT1980}. A \emph{list assignment} $L$ associates a set $L(v)$ of available colors with each vertex $v$ of a graph $G$. An \emph{$L$-coloring} is a proper vertex coloring $c$ such that $c(v)\in L(v)$ for every $v\in V(G)$. The list chromatic number $\chi_L(G)$ is the least integer $k$ such that $G$ admits an $L$-coloring for every list assignment satisfying $|L(v)|\ge k$ for all $v\in V(G)$. 

Note that the following proposition trivially follows from the definition.

\begin{proposition}\label{trivial}
$$
\chi(G)\le\chi_L(G)\le\Delta(G)+1,
$$
where $\Delta(G)$ denotes the maximum degree of $G$.
\end{proposition}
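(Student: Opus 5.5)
The statement consists of two inequalities, and I would prove them separately, each straight from the definitions.

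For the lower bound $\chi(G)\le\chi_L(G)$, I would apply the definition of $\chi_L$ to a constant list assignment. Set $k=\chi_L(G)$ and take $L(v)=\{1,\dots,k\}$ for every $v\in V(G)$. Every list then has size $k$, so by definition $G$ admits an $L$-coloring $c$. Such a $c$ is a proper coloring of $G$ that uses at most $k$ colors. Hence $\chi(G)\le k$.

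For the upper bound $\chi_L(G)\le\Delta(G)+1$, I would use greedy coloring. Let $L$ be any list assignment with $|L(v)|\ge\Delta(G)+1$ for all $v$. Order the vertices $v_1,\dots,v_m$ arbitrarily and process them in that order. When we reach $v_i$, at most $\deg(v_i)\le\Delta(G)$ of its neighbours are already colored. These neighbours forbid at most $\Delta(G)$ colors of $L(v_i)$, so some color of $L(v_i)$ remains available, and we assign it to $v_i$. By construction the final coloring is proper and respects the lists. Since $L$ was arbitrary, $\chi_L(G)\le\Delta(G)+1$.

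No step here is a real obstacle. The only care needed is in the lower bound: $\chi_L$ quantifies over all list assignments, so in particular it covers the constant one, and this is the point that gives $\chi\le\chi_L$. The graphs are finite, so the greedy procedure terminates.
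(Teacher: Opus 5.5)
Your proof is correct: the constant list assignment $L(v)=\{1,\dots,\chi_L(G)\}$ gives $\chi(G)\le\chi_L(G)$, and greedy coloring from lists of size at least $\Delta(G)+1$ gives the upper bound. The paper gives no written proof and only remarks that the proposition follows trivially from the definition, so your argument supplies exactly the standard reasoning that remark refers to.
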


List chromatic number was studied for many families of graphs including complete multipartite graphs (see \cite{Alon1992, Shabanov2011}), planar graphs (see \cite{Thomassen1994}), line graphs of bipartite graphs (see \cite{Galvin1995}) and pseudorandom graphs (see \cite{SpLChr}).

Unfortunately, little is known about the behaviour of list chromatic number of $G(n,r,s)$ graphs. For positive $s$ we in fact are aware of only one result due to  Zakharov~\cite{ZChrom}.

\begin{theorem}
    \label{Zakharov}
    For fixed integers $r > s \geq 0$ we have
    $$
    \chi_L(G(n,r,s))=O(n^{s+1}\ln n).
    $$
\end{theorem}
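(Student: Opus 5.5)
The bound follows from a general result on list colourings of graphs whose neighbourhoods have bounded density. For a graph $G$ with maximum degree $\Delta$ in which every neighbourhood spans few edges, Johansson-type results give $\chi_L(G) = O(\Delta/\ln \Delta)$. More elementary is the Alon--Krivelevich--Sudakov bound: if every neighbourhood spans at most $\Delta^2/f$ edges, then $\chi_L(G) = O(\Delta/\ln f)$. Neither of these is quite what we need. Here $G(n,r,s)$ is regular of degree
$$\Delta=\binom{r}{s}\binom{n-r}{r-s}=\Theta(n^{r-s}),$$
while the target is $n^{s+1}\ln n$. So the bound must come from the structure of the graph rather than from degree alone.

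Instead I would use the classical greedy/probabilistic covering argument: $\chi_L(G)\le \chi_f$-type bound times a log factor. Concretely, the plan is to show that $V=\binom{[n]}{r}$ can be covered by independent sets that are large and well spread. Then, for any list assignment with lists of size $k=Cn^{s+1}\ln n$, we colour via a random partition of the colours.

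Step 1 (independent sets). For each $(s+1)$-subset $T\subset[n]$, the star $\mathcal F_T=\{A: T\subseteq A\}$ is independent in $G(n,r,s)$, since any two members share at least $s+1$ elements. There are $m=\binom{n}{s+1}=\Theta(n^{s+1})$ stars, they cover all vertices, and each vertex lies in exactly $\binom{r}{s+1}=O(1)$ of them.

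Step 2 (list colouring from a cover). Given lists $L(A)$ of size $k$, assign each colour $c$ independently and uniformly at random to one of the $m$ stars $T(c)$. Colour $A$ with any $c\in L(A)$ such that $T(c)\subseteq A$. This colouring is proper: if adjacent $A,B$ both receive colour $c$, then $T(c)\subseteq A\cap B$, so $|A\cap B|\ge s+1$, a contradiction.

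Step 3 (every vertex gets a colour). It remains to check that every vertex has an admissible colour. The probability that no colour of $L(A)$ lands on a star contained in $A$ is
$$\Bigl(1-\binom{r}{s+1}/m\Bigr)^{k}\le \exp\bigl(-k/m\bigr).$$
With $k=C\,m\ln\binom{n}{r}$ this is less than $\binom{n}{r}^{-1}$. A union bound over all vertices then gives positive probability that every vertex is coloured. Since $r$ is fixed, $\ln\binom{n}{r}=O(\ln n)$, so $k=O(n^{s+1}\ln n)$, which proves the theorem.

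This argument is essentially routine. The only point needing care is Step 2: checking that one colour per star gives a proper colouring even though stars overlap. This works because we only need each colour class to be contained in a single star, and a set contained in a star is independent. I expect no step to be a serious obstacle. The main thing to verify is the exponent count in the union bound, which relies on $r$ being fixed so that $\ln\binom{n}{r}$ is of order $\ln n$.
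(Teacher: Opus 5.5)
Your proof is correct, and it is essentially the argument of the paper's Theorem~\ref{UB1}: assign each colour an independent random independent set, then take a union bound over vertices. You specialise it to the uniform distribution on the $\binom{n}{s+1}$ stars, which gives $\chi_f(G(n,r,s))\le\binom{n}{s+1}/\binom{r}{s+1}$; the paper only cites the statement from Zakharov but notes that Theorem~\ref{UB1} yields this bound directly. One small point: take $C>1$, or keep the factor $\binom{r}{s+1}$ in the exponent, so that the union bound is strictly less than $1$.
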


Together with Proposition \ref{trivial}, this gives
$$
\chi_L(G(n,r,s))=\Theta(n^{r-s})
\qquad\text{for }r\leq2s+1,
$$
and
$$
c_{r,s}n^{s+1}\leq\chi_L(G(n,r,s))
\leq C_{r,s}n^{s+1}\ln n \qquad\text{for }r > 2s+1.
$$
Here $c_{r,s}$ and $C_{r,s}$ are positive constants.

The Kneser case is a little bit more studied.
Bulankina and Kupavskii~\cite{BulKup} proved the following two theorems.

\begin{theorem}\label{Low1}
    For every fixed $\varepsilon\in(0,1/2)$ and any $2 \leq r \leq n^{1/2-\varepsilon}$ we have
    $$
        \chi_L(G(n,r,0)) \geq C_{\varepsilon}n\ln n
    $$
    for some $C_\varepsilon > 0$.
\end{theorem}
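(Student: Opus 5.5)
We will prove Theorem~\ref{Low1} with the classical Erd\H{o}s--Rubin--Taylor/Alon argument for complete bipartite graphs, transplanted to the Kneser graph. We fix a palette $S$ of $m$ colors, where $m$ is of order $k^2/\ln n$ or somewhat larger, and we take $k = C_\varepsilon n\ln n$ with $C_\varepsilon$ small. The vertices of $G(n,r,0)$ will receive random $k$-subsets of $S$, and we will show that with positive probability no $L$-coloring exists. Two vertices are adjacent exactly when they are disjoint, so the constraint is: for each color $c$, the vertices that use color $c$ must form an intersecting family. The structural input is Hilton--Milner together with a stability version of Erd\H{o}s--Ko--Rado. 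Any intersecting family in $\binom{[n]}{r}$ of size larger than roughly $r\binom{n-2}{r-2}$ is contained in a star $\{A: i\in A\}$. Since $r\le n^{1/2-\varepsilon}$, such a family is a $\le r^2/n \le n^{-2\varepsilon}$ fraction of all vertices. So, up to small non-star color classes, an $L$-coloring amounts to choosing for each color $c$ an element $i_c\in[n]$ and coloring with $c$ only vertices in the star of $i_c$.

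The plan is to restrict to a structured subfamily where this star structure is forced. Take a set $X$ of roughly $n^{1-\varepsilon}$ pairwise disjoint $r$-sets; these form a clique. More usefully, take a sunflower-free design so that each element $i\in[n]$ lies in few chosen sets. A coloring then induces a map $f:S\to[n]$, $c\mapsto i_c$. A vertex $A$ is colorable only if $L(A)\cap f^{-1}(A)\neq\emptyset$, or if it uses one of the exceptional non-star colors. We will bound the number of essentially different maps $f$ by $n^m$. For a fixed $f$, each element $i$ receives a color class $f^{-1}(i)$. Because $|f^{-1}(A)|=\sum_{i\in A}|f^{-1}(i)|$, averaging shows that a constant fraction of $r$-sets $A$ satisfy $|f^{-1}(A)|\le 2rm/n$. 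For such $A$, a uniformly random $k$-subset of $S$ misses $f^{-1}(A)$ with probability about $\exp(-2rk/n)$.

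To finish, we will choose about $N$ vertices that are independent under the random lists; for instance take all $\binom{n}{r}$ vertices, whose lists are independent. For a fixed $f$, the expected number of vertices with no admissible star color is at least $\binom{n}{r}e^{-2rk/n}$. By independence, the probability that all of them are instead absorbed by exceptional colors is tiny. Exceptional classes are non-star intersecting families, so their total size is at most $m\cdot r\binom{n-2}{r-2}$. This is smaller than $\binom nr e^{-2rk/n}/2$ provided $2rk/n\lesssim \ln(n/r^2) \approx 2\varepsilon\ln n$. A Chernoff bound then gives failure probability at most $\exp(-\Omega(\binom nr e^{-2rk/n}))$, which beats the union bound over $n^m$ maps and over the choices of exceptional structure.

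This parametrization, however, only gives $k\asymp \varepsilon\, n\ln n/r$. That is off by a factor of $r$, so the real obstacle is removing this loss. To remove it, we will not let every vertex be an independent trial. Instead we restrict to a ``transversal'' subfamily: partition $[n]$ into $r$ blocks $B_1,\dots,B_r$ of size $n/r$, and consider sets with one element from each block. Here $f$ splits mass across the blocks, and we can run the complete-$r$-partite-type argument (Alon's $\Theta(n\ln n)$-type bound for $K_{n/r,\dots}$-like structures) per block. The point is that a vertex fails if it avoids the colors assigned to its $r$ elements, and each block has only $n/r$ elements among which the $m$ colors are spread. This is the step I expect to be hardest. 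If it does not yield $n\ln n$ directly, I would fall back on choosing the lists to be constant on the $r$-sets sharing a fixed $(r-1)$-set in blocks $2..r$, reducing the problem to the list chromatic number of $K_{n/r}$-blowups, where the $\ln n$ comes from the ERT counting argument.
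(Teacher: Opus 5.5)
Your proposal has a genuine gap. The argument you actually carry out gives at most $k\asymp\varepsilon n\ln n/r$, as you concede, and the step meant to recover the factor $r$ (transversal blocks, or the fallback with constant lists) is only sketched, not argued. Even the weaker bound needs a correction. A non-star intersecting family has size at most $\binom{n-1}{r-1}-\binom{n-r-1}{r-1}+1\approx\frac{r^2}{n}\binom{n-1}{r-1}\approx\frac{r^3}{n^2}\binom{n}{r}$; it is an $r^2/n$ fraction of a star, not of all vertices. So the non-star colours can absorb up to about $m\frac{r^3}{n^2}\binom{n}{r}$ vertices, and the comparison you need is $e^{-2rk/n}\gtrsim mr^3/n^2$. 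You dropped the factor $m$. With your palette $m\asymp k^2/\ln n$ and $k\asymp\varepsilon n\ln n/r$, the right-hand side is of order $\varepsilon^2 r\ln n\gg1$, so the comparison fails outright. It only goes through with $m=O(k)$.

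More importantly, you have misidentified where the factor $r$ is lost, so the block construction cannot help. The per-vertex failure probability $e^{-\Theta(rk/n)}$ is not the culprit. The statement itself is only quoted in the paper, but the closest argument there, the proof of Theorem~\ref{main_kgnr}, has exactly the same quantity $4^{-8qr/n}$. The loss comes from disposing of the non-star classes by a deterministic capacity count. Their capacity is at least a fixed polynomial fraction $n^{-O(1)}$ of $\binom{n}{r}$, so the failing fraction $e^{-\Theta(rk/n)}$ must exceed $n^{-O(1)}$, which forces $rk/n=O(\ln n)$. The block family does not change this. A transversal still meets $r$ elements and sees about $rm/n$ star colours, and Hilton--Milner-type non-star independent sets of relative size about $r^2/n$ of a star still exist, so the same computation reappears.

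The missing idea is to stop separating star and non-star classes and to union bound over containers instead. The greedy argument of Lemma~\ref{mainLKG}, driven by Theorem~\ref{Spectr}, puts every independent set inside one of at most $\exp(O(r^2\ln^2 n))$ sets of size at most $2\binom{n-1}{r-1}$. (For $r\le\sqrt n$ the ratio $\binom{n-1}{r-1}/\binom{n-r-1}{r-1}$ is $O(1)$, so the procedure selects only $O(r\ln n)$ vertices.) Fix a map $\varphi$ from the $2q$ colours to containers. Half of the vertices lie in the containers of at most $8qr/n$ colours. Hence $\varphi$ is compatible with the random lists, meaning every vertex has a list colour whose container contains it, with probability at most $\exp\bigl(-\tfrac12\binom{n}{r}4^{-8qr/n}\bigr)$. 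The union bound over all $\varphi$ costs only $\exp(O(qr^2\ln^2 n))$. Since $\binom{n}{r}\ge(n/r)^r$, for $q=\delta n\ln(en/r)$ with $\delta$ small the quantity $\binom{n}{r}4^{-8qr/n}$ is still $\binom{n}{r}^{1-O(\delta)}$, which dominates that cost. The per-vertex failure probability thus only has to beat roughly $\binom{n}{r}^{-1}$ rather than a fixed power of $n$, and this is exactly what removes the factor $r$.
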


\begin{theorem}\label{Up}
    For any $r$ and $n$ such that $2r \leq n$ we have
    $$
        \chi_L(G(n,r,0))\le n\ln\frac{n}{r}+n.
    $$
\end{theorem}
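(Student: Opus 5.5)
The plan is a one-round random assignment of colors to ground elements, followed by a first-moment count. The bound $n\ln\frac{n}{r}+n$ is exactly what makes the expected number of badly covered vertices less than one.

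\textbf{The coloring scheme.} Let $k=\lceil n\ln\frac{n}{r}+n\rceil$ and fix a list assignment $L$ with $|L(A)|\ge k$ for every $A\in\binom{[n]}{r}$. Let $\mathcal C=\bigcup_A L(A)$. To each color $c\in\mathcal C$ we attach one element $x_c\in[n]$, and we color a vertex $A$ with some $c\in L(A)$ such that $x_c\in A$, whenever such a $c$ exists. Such a coloring is automatically proper. All vertices receiving color $c$ contain the common element $x_c$, so they pairwise intersect. Hence no two of them are adjacent in $G(n,r,0)$. It therefore suffices to choose the $x_c$ so that every $A$ has at least one $c\in L(A)$ with $x_c\in A$.

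\textbf{Random choice and first moment.} Choose each $x_c$ independently and uniformly from $[n]$. For a fixed $A$, the events $x_c\notin A$ for $c\in L(A)$ are independent, and each has probability $1-r/n$. Therefore
$$
\mP\left(A \text{ is not covered}\right)\le\skob{1-\frac rn}^{k}\le e^{-kr/n}\le e^{-r\ln(n/r)-r}=\skob{\frac{r}{en}}^{r}.
$$
Summing over all vertices and using $\binom nr<\skob{\frac{en}{r}}^r$, the expected number of uncovered vertices is less than $\binom nr\skob{\frac{r}{en}}^r<1$. So some choice of the $x_c$ covers every vertex, and the scheme above gives an $L$-coloring.

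\textbf{Remaining checks.} The argument only needs routine care, with no real obstacle. First, we need the strict inequality $\binom nr<(en/r)^r$. It follows from $\binom nr\le n^r/r!$ together with $r!>(r/e)^r$. Second, rounding $k$ up to an integer only helps. Third, lists larger than $k$ can be truncated to size $k$. The hypothesis $2r\le n$ is not actually needed for this bound; it only ensures the graph is nonempty, so the statement is meaningful.
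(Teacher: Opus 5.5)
Your approach is correct and is the standard argument. The paper does not prove this theorem; it quotes it from Bulankina and Kupavskii. Your scheme is exactly the proof of Theorem~\ref{UB1} specialized to $G(n,r,0)$: $\mathbf{Q}$ is uniform on the $n$ stars $\{A : x\in A\}$ (so $\chi_f=n/r$), and you compute the first-moment bound directly against $\binom nr$ rather than through the generic $\lceil\chi_f(\ln N+1)\rceil$.

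There is one slip in your ``remaining checks''. Because $\chi_L$ is an integer, proving $\chi_L\le x$ with $x=n\ln\frac nr+n$ means showing that lists of size $\lfloor x\rfloor$ always suffice. Showing this for lists of size $\lceil x\rceil$ gives only $\chi_L\le\lceil x\rceil$, which exceeds $x$ whenever $x\notin\mathbb{Z}$, i.e.\ essentially always. So rounding up helps the estimate but weakens the conclusion. With your crude bound $\binom nr<(en/r)^r$, the estimate does not close if you round down: you would only get an expected count below $e^{r/n}$, which is larger than $1$.

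The repair is short. Take $k=\lfloor x\rfloor>x-1$, so each vertex fails with probability less than $(r/(en))^r e^{r/n}$. Then sharpen $r!>(r/e)^r$ to $r!\ge e(r/e)^r$, which follows from $\ln r!\ge\int_1^r\ln t\,dt=r\ln r-r+1$; this gives $\binom nr\le e^{-1}(en/r)^r$. The expected number of uncovered vertices is then less than $e^{r/n-1}\le e^{-1/2}<1$, since $r\le n/2$. In fact $r<n$ already suffices, so your remark that the hypothesis $2r\le n$ is inessential still stands.
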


Inozemtsev and Kupavskii~\cite{InKup} subsequently strengthened the lower bound. To be more precise, they obtained the following theorem.

\begin{theorem}\label{Low2}
    Fix $\varepsilon > 0$ and let $3 \leq r \leq \sqrt\frac{n}{1+\varepsilon}$. Then
    $$
    \chi_L(G(n,r,0)) \geq \skob{\frac{r-2}{8r}-\frac{\varepsilon}{2}}n\ln n
    $$
    for all $n > n_0(\varepsilon)$.
\end{theorem}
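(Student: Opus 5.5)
We argue by contradiction using random lists, in the spirit of the classical lower bound for complete bipartite graphs. Fix a palette $[m]$ with $m$ to be chosen polynomial in $n$, and put $k=\flr{\skob{\frac{r-2}{8r}-\frac{\varepsilon}{2}}n\ln n}$. Assign to every $r$-set $A$ an independent uniformly random $k$-subset $L(A)\subset[m]$. We aim to show that with positive probability there is no proper $L$-coloring of $G(n,r,0)$.

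The structural input is the following. In any proper coloring $c$, each color class $\mathcal F_\alpha=c^{-1}(\alpha)$ is an intersecting family. So if we fix one member $B_\alpha\in\mathcal F_\alpha$ for each used color $\alpha$, every vertex of color $\alpha$ meets $B_\alpha$. For $x\in[n]$ set $S_x=\{\alpha: x\in B_\alpha\}$. Then
$$\sum_x|S_x|\le rm,$$
and every vertex $A$ satisfies
$$c(A)\in L(A)\cap U_A,\qquad U_A:=\bigcup_{x\in A}S_x .$$
Hence the existence of an $L$-coloring implies the existence of a \emph{certificate}, namely a choice of $(B_\alpha)_{\alpha\in[m]}$, such that $L(A)\cap U_A\neq\emptyset$ for every $A$. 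There are at most $\binom nr^m\le n^{rm}$ certificates, so by a union bound it suffices to show that for each fixed certificate this event has probability $o(n^{-rm})$.

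Fix a certificate. By averaging, the set $X$ of elements with $|S_x|\le (1+\delta)rm/n$ satisfies $|X|\ge \frac{\delta}{1+\delta}n$. Every $r$-subset $A\subset X$ then has
$$|U_A|\le (1+\delta)\frac{r^2m}{n}=:qm,$$
and $q<1$ thanks to the hypothesis $r^2\le n/(1+\varepsilon)$ once $\delta$ is small. Since lists are independent, the probability that all $A\in\binom Xr$ have $L(A)\cap U_A\ne\emptyset$ is at most
$$\skob{1-(1-q)^k}^{\binom{|X|}{r}}\le \exp\skob{-\binom{|X|}{r}e^{-ck}},$$
with $c$ roughly $-\ln(1-q)$. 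We need this to be at most $n^{-rm}$. It then remains to choose $m$ (something like $m\approx\binom{|X|}{r}/n^{\Theta(1)}$) so that $\binom{|X|}r e^{-ck}\gg rm\ln n$ while $k$ stays of order $n\ln n$.

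The main obstacle is getting the stated constant $\frac{r-2}{8r}$ rather than a weaker one. The crude bound $|U_A|\le r\max_x|S_x|$ wastes a lot, and the balance between $m$, $\binom{|X|}r$ and $k$ is delicate: the exponent $k$ must be comparable to $\frac{n}{r^2}\cdot r\ln n$ in the right way. I would first try to sharpen the union step. One option is to use that two elements of $B_\alpha$ can be ``spent'' to get better concentration, for instance by choosing $B_\alpha$ and exploiting that color classes of size more than $1$ force intersections. This would explain the factor $(r-2)/r$. Another option is to count certificates more cleverly, recording only the sets $S_x$ for $x$ in a random subset. If this fails, a fallback is to restrict to vertices inside a random subset $Y\subset[n]$ of size about $n/2$, which decouples $X$ from the certificate, and to pay a factor $2$ twice. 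That accounts for the $1/8$.
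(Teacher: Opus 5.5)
The paper does not prove this statement; it is quoted from \cite{InKup}. The relevant comparison is therefore with the paper's own lower-bound arguments for Theorems~\ref{main_gnrs} and~\ref{main_kgnr}. These use exactly your framework: random lists, one covering set per color, and a union bound over these choices. There is a genuine gap, and it is the step you flag as the main obstacle.

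The covering you chose, $\{A: A\cap B\neq\emptyset\}$ for $B\in\binom{[n]}{r}$, is in the language of Section~2 a $(K,\binom{n}{r})$-container with $K=\frac{n}{r}\bigl(1-\binom{n-r}{r}/\binom{n}{r}\bigr)=\Theta(r)$. So each covering set is about $r$ times the Erd\H{o}s--Ko--Rado bound. The consequences are as follows.
\begin{itemize}
\item A typical vertex sees a fraction $q=\Theta(r^2/n)$ of the palette. For $k=cn\ln n$ its list misses these colors with probability roughly $e^{-qk}\approx n^{-cr^2}$.
\item You must beat $\binom{n}{r}^{m}$ certificates with $m\ge k$, using at most $\binom{n}{r}\le n^r$ independent trials. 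The computation closes only for $c<\frac{r-1}{r^2}$ when $r$ is fixed, and only up to about $k\approx\frac{n}{r}\ln\frac{en}{r}$ in general.
\item For $r=\Theta(\sqrt n)$ the argument collapses completely. Then $q$ is a constant, so $(1-q)^k=e^{-\Theta(n\ln n)}$, while $\binom{n}{r}=e^{O(\sqrt n\ln n)}$. The hypothesis $r\le\sqrt{n/(1+\varepsilon)}$ enters your argument only as $q<1$, which is far from enough.
\end{itemize}

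This ceiling is built into the certificate, not an artifact of crude estimates. Take the $B_\alpha$ to be independent uniform $r$-sets. A fixed $A$ misses a given $B_\alpha$ with probability at most $(1-\frac{r}{n})^r\le e^{-r^2/n}$. Hence once $k\ge\frac{n}{r^2}\bigl(\ln\binom{n}{r}+\ln n\bigr)$, which is $o(n\ln n)$ as $r\to\infty$, every list assignment admits a certificate that passes your test. So the remedies in your last paragraph cannot succeed: spending two elements of $B_\alpha$, restricting to a random half $Y$, or paying a factor $2$ twice. They are fitted to the target constant, and factor-two bookkeeping cannot undo a factor of $r$.

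Two smaller remarks. A palette polynomially larger than $k$ only hurts, since the certificate count grows like $n^{rm}$; take $m$ a large constant multiple of $k$. With that choice your argument does prove the statement for $3\le r\le 9$, because $\frac{r-1}{r^2}>\frac{r-2}{8r}$ there. For small $\varepsilon$ it fails for every $r\ge 10$.

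The missing idea is to accept many more covering sets in exchange for covering sets of size $O\bigl(\binom{n-1}{r-1}\bigr)$. Lemma~\ref{mainLKG}, built on the spectral supersaturation bound of Theorem~\ref{Spectr}, does exactly this. For $r\le\sqrt{n/(1+\varepsilon)}$ one has $\binom{n-1}{r-1}/\binom{n-r-1}{r-1}\le e^{r^2/(n-2r)}=O(1)$. So for fixed $\varepsilon'$ the lemma gives a $(1+\varepsilon',f)$-container with $\ln f=O(r^2\ln^2 n)$.

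Run your argument with these containers, as in the proof of Theorem~\ref{main_kgnr}:
\begin{itemize}
\item at least half of the vertices lie in only $O(\frac{r}{n}m)$ of the chosen containers;
\item the probability that a list misses all available colors rises from about $n^{-cr^2}$ to $n^{-O(cr)}$;
\item this beats the union bound $f^{m}=e^{O(r^2 n\ln^3 n)}$ for a small absolute constant $c$, uniformly for $10\le r\le\sqrt{n/(1+\varepsilon)}$.
\end{itemize}
Lemma~\ref{mainLKG} is stated for $r\ge 10$, so your own argument would cover $r\le 9$. That restores the order $n\ln n$.

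Reaching the specific constant $\frac{r-2}{8r}-\frac{\varepsilon}{2}$ near $r\approx\sqrt n$ still needs care, since there $\binom{n}{r}$ is only about $n^{r/2}$. You would have to choose the list density, the container slack and the Markov threshold carefully, rather than use the factor-$2$ choices made in the proof of Theorem~\ref{main_gnrs}.
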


In this paper we address the following two questions. The first one is the correct growth rate of $\chi_L(G(n,r,s))$ for $r > 2s+1$. We give an answer to this question in a slightly more restraining regime $r \geq 2s+3$. 

\begin{theorem}
    \label{main_gnrs}
    Fix integers $s > 0$ and $r \geq 2s+3$. Then
    $$
        \chi_L(G(n,r,s)) \geq (1-o(1))\frac{r-2s-1}{8\ln 2\, \binom{r}{s+1}}\binom{n}{s+1}\ln n.
    $$
\end{theorem}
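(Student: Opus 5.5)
We follow the standard Alon-type lower bound for complete multipartite graphs, adapted to $G(n,r,s)$ by finding a large "almost complete multipartite" structure. The plan is to randomly assign lists and show that with positive probability no proper $L$-coloring exists. Let $k$ be the list size and $P$ a palette of $m$ colors, with $m$ chosen of order $k^2/\ln$-scale; we will tune $k \approx c\binom{n}{s+1}\ln n$ and $m$ so that the final count balances.

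\textbf{Structure.} For each $(s+1)$-set $T\subset[n]$, consider the family $\mathcal F_T$ of $r$-sets containing $T$. Two $r$-sets from distinct families $\mathcal F_T,\mathcal F_{T'}$ are adjacent only if their intersection has size exactly $s$. This is not automatic, so we pass to a cleverer substructure: for each color $c$, a color class is an independent set, i.e.\ a family with no intersection of size exactly $s$. By Frankl--Füredi type results (which we take as the key extremal input), for $r\ge 2s+2$ a large independent set in $G(n,r,s)$ essentially consists of sets containing a fixed $(s+1)$-set, so each color class "covers" only $O(1)$ of the $(s+1)$-sets $T$ in a strong sense. Concretely, we will say a color $c$ \emph{serves} $T$ if $c$ is used on at least a certain fraction of vertices of some designated random subfamily of $\mathcal F_T$; the extremal lemma should give that each color serves at most a bounded number (ideally about $\binom{r}{s+1}/(r-2s-1)$ on average, yielding the constant in the statement) of the sets $T$. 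The condition $r\geq 2s+3$ is where this counting lemma works with the desired slack.

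\textbf{Random lists.} For each $T$, pick a random list $L_T$ of $k$ colors from $P$ and give it to many vertices of $\mathcal F_T$ (e.g.\ all of them, or a random sample of size $N$). In any proper coloring, for each $T$ the colors used on $\mathcal F_T$ (a subset of $L_T$) must, since $\mathcal F_T$ itself spans many edges across different supersets, occupy a substantial part; the argument then requires that the union over $T$ of "served" colors hits every $L_T$. Define for a coloring the set $S_T\subset L_T$ of colors serving $T$; every $T$ has $S_T\neq\emptyset$, while $\sum_T |S_T|\le C m$. Hence the assignment $T\mapsto S_T$ yields a set $S\subset P$... more precisely, as in Alon's argument, we union-bound over all choices of a "transversal" pattern: the probability that a fixed collection of sets of colors of total size $Cm$ meets every random $L_T$ is at most $\prod_T (1-(1-\theta)^k)$-type bounds, and with $k$ as in the theorem this beats the number $\binom{n}{s+1}$-dependent count of patterns, roughly $2^{m}$ choices per structure, which is where the $\ln 2$ in the constant originates.

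\textbf{Main obstacle.} The hardest step is the extremal counting lemma: showing that a single independent set in $G(n,r,s)$ can have large density in only few of the stars $\mathcal F_T$, with the sharp constant $\binom{r}{s+1}/(r-2s-1)$ and uniformly enough to survive the union bound. I would first try to prove it via a kernel/sunflower argument (Frankl's delta-system method): an independent family that is dense in two stars $\mathcal F_T,\mathcal F_{T'}$ with $|T\cap T'|\le s$ contains two members intersecting in exactly $s$ elements, using $r\ge 2s+3$ to have room to choose disjoint completions. Then the remaining probabilistic calculation is routine optimization of $m$ and $k$.
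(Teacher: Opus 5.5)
Your argument breaks at the choice of lists. Any two members of a star $\mathcal F_T$ contain $T$, so they meet in at least $s+1$ points. Hence $\mathcal F_T$ is an \emph{independent} set, not a family spanning many edges. Also, every $r$-set lies in $\binom{r}{s+1}$ stars, so giving $L_T$ to all of $\mathcal F_T$ is not well defined.

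Suppose instead a (disjoint) sample of each star receives one common list $L_T$. Then each sample can be coloured with a single colour, and your instance restricted to the samples becomes a list-colouring problem on an auxiliary graph with at most $\binom{n}{s+1}$ vertices. Greedy colouring solves it once $k\ge\binom{n}{s+1}$, so no argument about the samples can produce the factor $\ln n$. The same defect shows in your count. There are only $\binom{n}{s+1}$ independent random lists, so any fixed pattern in which every $T$ is served has probability at least $(k/m)^{\binom{n}{s+1}}\ge 2^{-m}$ (as soon as $k\ge\binom{n}{s+1}/\ln 2$). A union bound over $2^m$ or more such patterns therefore cannot succeed. In Alon-type arguments the logarithm comes precisely from many \emph{independent} lists inside each part.

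Your ``serving lemma'' is also not available in the form you need. Take disjoint $(s+1)$-sets $T,T'$ and disjoint blocks $X,X'\subset[n]\setminus(T\cup T')$ of size about $n/2$. The family $\{A\supset T: A\setminus T\subset X\}\cup\{B\supset T': B\setminus T'\subset X'\}$ is independent, since all cross-intersections are empty and $s>0$. Yet it has density about $2^{-(r-s-1)}$ in both stars, which contradicts your delta-system claim. Moreover, pigeonhole only guarantees that some colour occupies a $1/k$-fraction of a sample. At that density, the same construction with about $k^{1/(r-s-1)}$ blocks gives an independent set serving polynomially many stars.

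What is missing is a way to compress the enormous number of independent sets into few types while keeping every vertex's list independent. The paper gives each of the $\binom{n}{r}$ vertices its own uniformly random half of a palette of $2q$ colours, with $q=\lfloor\delta\binom{n}{s+1}\ln n\rfloor$. In place of a serving lemma it uses containers (Lemma~\ref{mainL}): $\exp(O(n^s\ln^2 n))$ sets of size at most $2\binom{n-s-1}{r-s-1}$ that cover all independent sets. These are built from the Kupavskii--Shubin supersaturation theorem (Theorem~\ref{SuperSat}, which is where $r\ge 2s+3$ enters) by the greedy maximum-degree algorithm.

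A proper colouring yields a map $\varphi$ from colours to containers, and there are only $\exp(O(n^{2s+1}\ln^3 n))$ such maps. For fixed $\varphi$, averaging shows that half of the vertices lie in at most $8\delta\binom{r}{s+1}\ln n$ of the chosen containers. The list of each such vertex, independently, avoids all colours whose container contains it with probability at least $n^{-(1+\varepsilon)8\delta\binom{r}{s+1}\ln 2}$. The proof closes because $\binom{n}{r}n^{-(1-\varepsilon^2)(r-2s-1)}\gg n^{2s+1}\ln^3 n$.

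So the constant is not an extremal property of stars:
\begin{itemize}
\item $\binom{r}{s+1}$ comes from $\binom{n-s-1}{r-s-1}/\binom{n}{r}=\binom{r}{s+1}/\binom{n}{s+1}$;
\item $r-2s-1$ comes from comparing $n^{r}$ independent trials with the $n^{2s+1}$ in the union bound;
\item $8\ln 2$ comes from the half-palette lists together with three factors of $2$ (palette size, container size, averaging).
\end{itemize}
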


Furthermore we give an upper bound of the same order of magnitude with explicit constant. Note that the result about the correct order of magnitude is not new, since Theorem \ref{Zakharov} already settles this question. Our results, however, have two major advantages. First, these results can be applied to almost any graph and second, the explicit constant in our results is smaller than the implicit constant in Theorem \ref{Zakharov}.

We give two bounds. The second one is stronger but at the same time a bit less general.

Before we go on to the results, let us give a definition of fractional chromatic number.

\begin{definition}
    Let $\mathcal{Q}$ be a set of all possible distributions on the collection of independent sets in graph $G$. Define 
    $$
        \chi_f(G) := \min_{\mathbf{Q} \in \mathcal{Q}} \max_{v \in G} \mathbf{Q}^{-1}(v \in I),
    $$
    where $I$ is a random independent set. We call $\chi_f(G)$ the fractional chromatic number of $G$.
\end{definition}

A lot of properties of fractional chromatic number are known. For example, it is easy to prove that for vertex transitive graph $G$ we have
$$
    \chi_f(G) = \frac{|V_G|}{\alpha(G)}.
$$
For fixed integers $s \geq 0$ and $r > 2s+1$, and for sufficiently large $n$, we have 
$$
\chi_f(G(n,r,s)) = \frac{1}{\binom{r}{s+1}}\binom{n}{s+1}.
$$

A simple probabilistic argument allows us to prove the following theorem.

\begin{theorem}
    \label{UB1}
    For any graph $G$ on $n$ vertices we have 
    $$
        \chi_L(G) \leq k = \lceil\chi_f(G)(\ln n + 1)\rceil.
    $$
\end{theorem}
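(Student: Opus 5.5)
The plan is to use the standard random-independent-set argument. Let $\mQ$ be an optimal distribution on independent sets of $G$ from the definition of $\chi_f(G)$. Then $\mQ(v\in I)\ge 1/\chi_f(G)$ for every vertex $v$. Fix an arbitrary list assignment $L$ with $|L(v)|\ge k$ for all $v$, and let $C=\bigcup_v L(v)$ be the set of all colors. Independently for each color $c\in C$, sample an independent set $I_c\sim\mQ$.

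We then color greedily. A vertex $v$ is \emph{covered} if $v\in I_c$ for some $c\in L(v)$. For each covered $v$, pick any such $c$ and set $c(v)=c$. This partial coloring is proper: if adjacent $u,v$ both receive color $c$, then $u,v\in I_c$, contradicting independence of $I_c$. So it suffices to show that, with positive probability, every vertex is covered.

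For a fixed $v$, the events $\{v\notin I_c\}$ for $c\in L(v)$ are independent, since distinct colors use independent samples. Each has probability at most $1-1/\chi_f(G)$. Hence
$$
\mP(v \text{ uncovered}) \le \skob{1-\frac{1}{\chi_f(G)}}^{k} \le \exp\skob{-\frac{k}{\chi_f(G)}} \le e^{-(\ln n+1)} = \frac{1}{en}.
$$
A union bound over the $n$ vertices gives $\mP(\text{some vertex uncovered})\le 1/e<1$. So some choice of the sets $I_c$ covers every vertex and yields an $L$-coloring, which proves $\chi_L(G)\le k$.

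The only subtle points are minor. First, the minimum in the definition of $\chi_f$ should be attained, or else we pass to a near-optimal $\mQ$. Attainment holds because the distributions on a finite set form a compact set and the objective is continuous; alternatively, the strict margin $1/e<1$ absorbs a small loss. Second, if $L(v)$ is larger than $k$, we truncate it to exactly $k$ colors, which only helps. I do not expect any real obstacle; the step requiring care is ensuring the independence across colors, which is why each color gets its own independent sample rather than a shared one.
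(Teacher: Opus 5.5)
Your proposal is correct and is essentially the paper's argument: give each color an independent sample $I_c \sim \mQ$ from an optimal distribution, bound the probability that $v$ has no $c \in L(v)$ with $v \in I_c$ by $(1-\chi_f(G)^{-1})^k \le (en)^{-1}$, and take a union bound over the $n$ vertices. You also spell out two points the paper leaves implicit, namely that the resulting coloring is proper and that the minimum defining $\chi_f(G)$ is attained.
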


Note that a similar theorem with slightly better bound in case of fixed $\chi_f(G)$ and slightly worse bound in case of arbitrary $\chi_f(G)$ was proved in~\cite{WCvB} (see Theorems 5 and 30). 

This theorem alone gives us something like 

$$
    \chi_L(G(n,r,s)) \leq (1+o(1))\frac{r}{\binom{r}{s+1}}\binom{n}{s+1}\ln n.
$$

The second result is a bit more involved.

\begin{theorem}
    \label{UB2}
    Let $G$ be a graph on $n \geq 2$ vertices. Let $a(G) = n\chi^{-1}_f(G)$ (note that $a(G) = \alpha(G)$ for vertex transitive graphs). Assume that $\frac{\chi_f(G)}{\sqrt{\ln a(G)}} > 10\ln n$ and $a(G) \geq 3$. Then
    $$
        \chi_L(G) \leq k = \lceil\chi_f(G)\skob{\ln a(G) + \ln \ln a(G)}\rceil + 3\left\lceil\frac{\chi_f(G)}{\sqrt{\ln a(G)}}\right\rceil
    $$
\end{theorem}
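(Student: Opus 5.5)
Fix a list assignment $L$ with $|L(v)| \geq k$ for every $v$, and let $\mathbf{Q}$ be an optimal distribution on independent sets, so that $\mathbf{Q}(v\in I)\geq 1/\chi_f(G)$ for every $v$. Write $\chi=\chi_f(G)$ and $a=a(G)=n/\chi$. The plan is a two-round version of the argument for Theorem \ref{UB1}, in the spirit of the semi-random "nibble" proofs of $\chi_L\lesssim \chi_f\ln\alpha$.

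\textbf{Round 1.} For each colour $c$ in $\bigcup_v L(v)$ independently, sample an independent set $I_c\sim\mathbf{Q}$. Call $c$ \emph{available} at $v$ if $c\in L(v)$ and $v\in I_c$. Spend the first $k_1=\lceil\chi(\ln a+\ln\ln a)\rceil$ colours of each list (fix a sublist $L_1(v)\subseteq L(v)$ of that size) in this way. Colour $v$ by any available colour in $L_1(v)$. Since each $I_c$ is independent, this is proper. The probability that $v$ stays uncoloured is at most $(1-1/\chi)^{k_1}\leq e^{-k_1/\chi}\leq \frac{1}{a\ln a}$. Hence the expected number of uncoloured vertices is at most $n/(a\ln a)=\chi/\ln a$.

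\textbf{Round 2.} Let $U$ be the set of uncoloured vertices. We want to colour $U$ greedily, or by any argument for graphs with few vertices, using the remaining $3\lceil \chi/\sqrt{\ln a}\rceil$ colours of each list, which are disjoint from $L_1$. The cleanest sufficient condition is that the induced graph $G[U]$ is $L_2$-colourable, which holds whenever $|U|\leq |L_2(v)|$, since $\chi_L(H)\leq |V(H)|$. So it suffices to have $|U|\leq 3\chi/\sqrt{\ln a}$ with positive probability. By Markov's inequality, $\mathbf P(|U|>3\chi/\sqrt{\ln a})\leq \frac{\chi/\ln a}{3\chi/\sqrt{\ln a}}=\frac{1}{3\sqrt{\ln a}}<1$, since $a\geq 3$. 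This already suffices, and the hypothesis $\chi/\sqrt{\ln a}>10\ln n$ appears unnecessary for this crude version.

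\textbf{Main obstacle.} The only subtle point is that $\mathbf Q(v\in I)\geq 1/\chi$ requires the minimum in the definition of $\chi_f$ to be attained. It is, because the set of distributions is compact, and otherwise one can use $\chi+\epsilon$ and absorb the slack in the ceilings. One also has to check the bound $e^{-k_1/\chi}\le 1/(a\ln a)$ uses only $k_1\geq \chi(\ln a+\ln\ln a)$, which holds. If instead the author intends the second round to handle $U$ via a degree argument, the condition $\chi/\sqrt{\ln a}>10\ln n$ would enter through a concentration (Chernoff) bound on how many neighbours in $U$ a vertex has. I would fall back on that version only if the simple $|U|\le |L_2|$ argument turned out to have a gap.
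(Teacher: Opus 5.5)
\textbf{There is a genuine gap in Round 2.} You choose the sublists $L_1(v)$ and $L_2(v)=L(v)\setminus L_1(v)$ separately at each vertex. So $L_2(v)$ is disjoint from $L_1(v)$, but not from $L_1(u)$ for a neighbour $u$ of $v$.

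A vertex $u\notin U$ adjacent to $v\in U$ may have received in Round 1 a colour $c\in L_1(u)\cap L_2(v)$. This only requires $u\in I_c$, which is compatible with $v$ being uncoloured, since $c\notin L_1(v)$. Colouring $G[U]$ from the lists $L_2$ takes care of the edges inside $U$. It ignores the edges between $U$ and the vertices already coloured in Round 1.

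Hence $|U|\le |L_2(v)|$ is not sufficient. For adversarial lists, every colour $c\in L_2(v)$ can lie in $L_1(u_c)$ for some neighbour $u_c$ of $v$. Your rule ``colour by any available colour'' may then block all of $L_2(v)$. Your remark that the hypothesis $\chi_f(G)/\sqrt{\ln a(G)}>10\ln n$ seems unnecessary is a symptom of exactly this omission.

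\textbf{The missing idea} is to split the palette globally rather than list by list, so that no colour reserved for the second round is ever used in the first. The paper proceeds as follows.
\begin{enumerate}
\item Set $l=\lceil \chi_f(G)/\sqrt{\ln a(G)}\rceil$ and $p=2l/k$. Declare each colour of $\bigcup_v L(v)$ a reserve colour independently with probability $p$, and a main colour otherwise.
\item The number of reserve colours in a list of size $k$ is $\mathrm{Bin}(k,p)$ with mean $2l$. A Chernoff bound shows it falls outside $[l,3l]$ with probability at most $2e^{-l/5}<2n^{-2}$. This is precisely where $l>10\ln n$ is used: a union bound over the $n$ vertices then gives one split that is good for all lists simultaneously.
\item Your Round 1 is run with main colours only. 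Each list keeps at least $k-3l\ge \chi_f(G)(\ln a(G)+\ln\ln a(G))$ of them, so your calculation goes through unchanged.
\item Markov's inequality leaves at most $l$ uncoloured vertices with positive probability.
\item No reserve colour was used in Round 1, so each uncoloured vertex can be coloured greedily from its at least $l$ reserve colours. It competes only with the at most $l-1$ other uncoloured vertices.
\end{enumerate}
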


For fixed integers $s \geq 0$ and $r > 2s+1$, all conditions of this theorem are satisfied for $G(n,r,s)$ when $n$ is sufficiently large. We can apply it and obtain a slightly better bound.

\begin{corollary}    
    Fix $r$ and $s$ such that $r > 2s+1$ and $s \geq 0$. We have 
    $$
        \chi_L(G(n,r,s)) \leq (1+o(1))\frac{r-s-1}{\binom{r}{s+1}}\binom{n}{s+1}\ln n.
    $$
\end{corollary}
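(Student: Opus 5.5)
The corollary should follow directly from Theorem \ref{UB2} applied to $G = G(n,r,s)$, with $N = \binom{n}{r}$ vertices. The work consists of computing the two parameters $\chi_f(G)$ and $a(G)$, checking the two hypotheses of the theorem, and showing that everything in the bound beyond the main term is of lower order.

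\emph{Parameters.} For fixed $s \geq 0$, $r > 2s+1$ and large $n$ we use the formula stated earlier,
$$
\chi_f(G) = \frac{\binom{n}{s+1}}{\binom{r}{s+1}} = \Theta(n^{s+1}).
$$
The graph is vertex transitive, so
$$
a(G) = \frac{N}{\chi_f(G)} = \frac{\binom{n}{r}\binom{r}{s+1}}{\binom{n}{s+1}} = \Theta\bigl(n^{r-s-1}\bigr).
$$
Since $r > 2s+1$ we have $r-s-1 \geq s+1 \geq 1$, so $a(G) \to \infty$. Taking logarithms gives $\ln a(G) = (r-s-1)\ln n + O(1)$, and hence $\ln\ln a(G) = O(\ln\ln n) = o(\ln n)$.

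\emph{Hypotheses.} The condition $a(G) \geq 3$ holds for large $n$ because $a(G) \to \infty$. For the other condition, note that $\ln N \leq r\ln n$. On the other side,
$$
\frac{\chi_f(G)}{\sqrt{\ln a(G)}} = \Theta\!\left(\frac{n^{s+1}}{\sqrt{\ln n}}\right),
$$
and since $s+1 \geq 1$ this eventually exceeds $10 r \ln n$. So both hypotheses of Theorem \ref{UB2} hold for all sufficiently large $n$.

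\emph{Bound.} Theorem \ref{UB2} now gives
$$
\chi_L(G) \leq \chi_f(G)\bigl((r-s-1)\ln n + o(\ln n)\bigr) + O\!\left(\frac{\chi_f(G)}{\sqrt{\ln n}}\right) + O(1).
$$
Every term after the main one is $o(\chi_f(G)\ln n)$. Therefore
$$
\chi_L(G) \leq (1+o(1))\,(r-s-1)\,\frac{\binom{n}{s+1}}{\binom{r}{s+1}}\,\ln n,
$$
which is the claimed bound.

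No step is a real obstacle. The point needing the most care is checking that $r > 2s+1$ forces the exponent $r-s-1$ to be positive, so that $a(G)$ grows and $\ln a(G)$ is comparable to $\ln n$. The remaining care is in tracking the asymptotics of the binomial coefficients for fixed $r$ and $s$.
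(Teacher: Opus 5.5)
Your proposal is correct and follows the paper's route. The paper obtains the corollary by applying Theorem \ref{UB2} with $\chi_f(G(n,r,s))=\binom{n}{s+1}/\binom{r}{s+1}$ and $a(G)=\binom{n-s-1}{r-s-1}=\Theta(n^{r-s-1})$, simply asserting that the hypotheses hold for large $n$. Your verification of those hypotheses and your tracking of the lower-order terms fill in exactly that remark.
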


The second question we address is the correct growth rate of $\chi_L(G(n,r,0))$ for $r = \Omega(n^{1/2})$. To this end we prove the following theorem.

\begin{theorem}
    \label{main_kgnr}
	Let $n \geq 1000$, $r \geq 4\ln n$, $20r \leq n$. Then
	$$
		\chi_L(G(n,r,0)) \geq \frac{n}{120}\ln\skob{\frac{en}{r}}.
	$$
\end{theorem}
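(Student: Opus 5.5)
The plan is to adapt the probabilistic lower-bound argument for list colourings of complete bipartite and multipartite graphs (Alon's argument) to the Kneser setting. The only structural fact about $G(n,r,0)$ it will use is that every colour class of a proper colouring is an \emph{intersecting} family of $r$-sets. Such families are very small on "cyclic" substructures. By Katona's circle argument, an intersecting family contains at most $r$ of the $n$ cyclic intervals of length $r$ in any cyclic ordering of $[n]$. Since $r\le n/20$, this is at most a $1/20$ fraction. So each single colour can serve only a small fraction of the vertices in any such structure, and we want to force $\Omega(n\ln(en/r))$ colours.

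\textbf{Step 1 (test graph).} Fix a palette $[m]$ with $m=\Theta(n\ln(en/r))$, and set $k=\lfloor \frac{n}{120}\ln(en/r)\rfloor$. Choose $T$ independent uniformly random cyclic orderings $\sigma_1,\dots,\sigma_T$ of $[n]$, with $T$ about $\ln n$; this is where the assumption $r\ge 4\ln n$ should enter. Let $H$ be the subgraph of $G(n,r,0)$ induced on all cyclic intervals of length $r$ in all the $\sigma_j$. Each cycle contains about $n/r$ pairwise disjoint intervals, forming a clique. Moreover, intervals coming from different cycles are disjoint with a fairly large probability when $r$ is small compared to $n$. We only need to show that $H$ is not $k$-choosable.

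\textbf{Step 2 (random lists and the bad event).} Give every vertex of $H$ an independent uniformly random $k$-subset of $[m]$. Fix a candidate partition of the palette, in Alon's style: a set $S\subseteq[m]$ of colours "reserved" for a given cycle, or, more generally, a map from colours to the cycles or intervals they are allowed to serve. The Katona bound says each colour serves at most $r$ intervals of each cycle. Hence in any proper $L$-colouring, each cycle needs at least $(n/r)/r$ "fresh" colours, and more precisely it needs its $n$ intervals covered by colours, each used at most $r$ times. I want to show that with positive probability, for every colouring pattern, some interval $A$ has $L(A)$ contained in the set of colours already saturated on its cycle. That probability is about $(1-|S|/m)^{k}$ per interval. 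Over the $n$ intervals of a cycle and the $T$ cycles these events are nearly independent. The union bound is then over the at most $\binom{m}{\le \cdot}$ choices of the saturated sets. The numerology $k\approx \frac{m}{c}\ln(en/r)$ against $\binom{m}{m/2}\le 2^m$ is exactly what produces the factor $\ln(en/r)$ rather than $\ln n$. Here the $n/r$ disjoint intervals per cycle play the role of the $\ln$ of part sizes.

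\textbf{Main obstacle.} The hardest step is making the union bound work. The colouring is chosen \emph{after} the lists, and a single colour class is an arbitrary intersecting family that may be large overall. Only its trace on each cycle is small, so the dependence between different cycles must be controlled. The first thing I would try is a "greedy saturation" ordering: process the cycles one at a time and expose lists cycle by cycle. At each cycle, count the colours that are already heavily used, namely those appearing on at least $r/2$ intervals of earlier cycles. Then argue via Katona's bound, applied with the random ordering, that such colours have few usable intervals on the new cycle with high probability; Chernoff bounds will need $r\ge 4\ln n$. The final constant $1/120$ would come out of the crude choices $m=2k$, $T$, and the $1/20$ from $20r\le n$, without optimisation.
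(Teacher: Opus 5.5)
There is a genuine gap, and it already appears in Step~1: your test graph $H$ is far too small to have list chromatic number of order $n\ln(en/r)$, however the lists and the union bound are handled afterwards. Since $H$ is an induced subgraph of $G(n,r,0)$, you have $\chi_f(H)\le\chi_f(G(n,r,0))=n/r$. The random-independent-set argument of Theorem~\ref{UB1} then gives $\chi_L(H)\le\lceil\frac{n}{r}(\ln|V(H)|+1)\rceil$. With $|V(H)|\le nT\approx n\ln n$ this is about $\frac{n}{r}\ln n$. That is roughly $n/4$ at $r=4\ln n$ and only $O(\ln n)$ at $r=n/20$. The target $\frac{n}{120}\ln(en/r)$ exceeds $n/4$ at $r=4\ln n$ once $n$ is large, and is about $n/30$ at $r=n/20$. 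For example, with $n=10^5$ and $r=5000$ you get $\chi_L(H)\le 300$, against a required value of roughly $3300$.

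Polynomially many cycles would not help either. Any subgraph witnessing the bound needs $\ln|V(H)|$ of order at least $r\ln(en/r)$, i.e.\ $\binom{n}{r}^{\Omega(1)}$ vertices. This also shows that the factor $\ln(en/r)$ does not come from the $n/r$ disjoint intervals of a cycle. It comes from $\frac1r\ln\binom{n}{r}\approx\ln(en/r)$ on essentially the whole vertex set.

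The second gap is the one you yourself flag as the main obstacle and leave open. Since the colouring is chosen after the lists, you must enumerate the possible colour classes, which are arbitrary intersecting families, using few enough ``types'' for a union bound. Katona's bound only controls the trace of one colour class on one cycle, which is just the fractional bound $n/r$. The greedy-saturation sketch gives no such count. If you take $T$ large enough to fix the first problem, you are essentially back to the whole graph with this step still missing.

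The paper supplies exactly this ingredient via graph containers, and runs the random-list argument on all of $\binom{[n]}{r}$:
\begin{enumerate}
\item The spectral supersaturation bound of Theorem~\ref{Spectr}, fed into the greedy maximum-degree algorithm, yields (Lemma~\ref{mainLKG}) at most $2(e\binom{n}{r}/k)^k$ sets, each of size at most $2\binom{n-1}{r-1}$, covering every independent set.
\item Every vertex of the full Kneser graph gets a random $q$-subset of $2q$ colours, and a map $\varphi$ from colours to containers is fixed.
\item By averaging, for at least half of the $\binom{n}{r}$ vertices $v$, at most $8qr/n$ colours $c$ have $v\in\varphi(c)$.
\item Each such $v$ independently has its list disjoint from these colours with probability at least $4^{-8\delta r\ln(en/r)}$.
\item A union bound over the $|\mathcal{C}|^{2q}$ maps $\varphi$ finishes the proof.
\end{enumerate}
It is the $\binom{n}{r}$ independent trials that beat $|\mathcal{C}|^{2q}$, which is exactly what a polylogarithmic family of cycles cannot provide.
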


Note that the combination of Theorems \ref{Low1}, \ref{Up}, \ref{main_kgnr} and \ref{LovKG} together with Proposition \ref{trivial} shows that 

$$ 
    \chi_L(G(n,r,0)) = \Theta\skob{n\ln\skob{en/r}}
$$
uniformly for $2 \leq r \leq (1/2 - \varepsilon)n$.

The rest of the paper is structured as follows. In section 2 we formulate auxiliary lemmas and theorems. In section 3 we prove these auxiliary results. In section 4 we prove the lower bounds for both $r \geq 2s+3$ and $s = 0$ cases. Finally in section 5 we prove upper bounds on $\chi_L(G)$.

\section{Auxiliary results}

In our proofs we need a supersaturation result on the number of edges inside a set of vertices. For the case $r \geq 2s+3$ we use the following result that is a compilation of two theorems from \cite{ShKup}.

\begin{theorem}\label{SuperSat}
    Fix two integers, $r$ and $s$, $s > 0$ and $r > 2s + 2$. For any $\varepsilon > 0$ there exists $\delta = \delta(\varepsilon, r, s) > 0$ such that for all $n \geq n_0(\varepsilon,r,s)$ and any $A \subset G$ such that $|A| > (1 + \varepsilon)\binom{n-s-1}{r-s-1}$ we have $e(A) > \frac{\delta |A|^2}{n^s}$.
\end{theorem}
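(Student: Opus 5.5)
This statement is described as a compilation of two theorems from \cite{ShKup}, so I would not reprove it from scratch. The plan is to derive it from those two results: a stability (Hilton--Milner type) theorem and a supersaturation/removal statement. Throughout, $|A| > (1+\varepsilon)\binom{n-s-1}{r-s-1}$ exceeds the Frankl--Füredi extremal size $\binom{n-s-1}{r-s-1}$ for families avoiding intersection size exactly $s$. This extremal size is attained by the stars $\{F : T \subset F\}$ with $|T| = s+1$, which is valid for $r > 2s+1$ and $n$ large.

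\textbf{Step 1: a removal-type dichotomy.} I would invoke (or reprove via the delta-system method of Frankl--Füredi) the statement that any family $A \subset \binom{[n]}{r}$ with $e(A) \le \eta |A|^2/n^s$ can be made $s$-intersection-free by deleting few sets. Since $|A|^2/n^s \asymp |A|\, n^{r-2s-1}$ for $|A| \asymp n^{r-s-1}$, the sparsity hypothesis says the average degree inside $A$ is a tiny fraction of the full degree $\binom{r}{s}\binom{n-r}{r-s}\asymp n^{r-s}$.

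This requires care, because the average degree inside $A$ being small does not immediately give removal. The mechanism I expect is this:
\begin{itemize}
\item Pass to a subfamily $A'$ of sets of degree at most $C|A|/n^s$, losing at most half of $A$.
\item Apply the delta-system lemma to find kernels.
\item Show that low degree forces almost all sets to contain one of few $(s+1)$-sets of a "star-like" structure.
\end{itemize}
Counting then gives $|A'| \le (1+\varepsilon/2)\binom{n-s-1}{r-s-1}$ up to the removed part. The condition $r > 2s+2$ (rather than $r > 2s+1$) is exactly what \cite{ShKup} needs to make the stability bound at the $(1+\varepsilon)$ scale work, since the Hilton--Milner-type second term is then of lower order.

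\textbf{Step 2: averaging to boost.} If the removal result only gives a linear-in-$|A|$ edge count for $|A|$ just above the threshold, I would extend it to all larger $|A|$ by random sampling:
\begin{itemize}
\item Take a uniformly random subfamily $B \subset A$ of size $m = \lceil (1+\varepsilon)\binom{n-s-1}{r-s-1}\rceil + 1$.
\item Each such $B$ has at least $\delta_0 m^2/n^s$ edges.
\item Each edge of $A$ lies in $B$ with probability about $(m/|A|)^2$.
\item Hence $e(A) \ge \delta_0 |A|^2/n^s \cdot (1-o(1))$.
\end{itemize}
This gives the stated quadratic form with $\delta = \delta_0/2$.

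\textbf{Main obstacle.} The hardest step is Step 1, the base supersaturation at size $(1+\varepsilon)$ times the extremal bound with density $\delta$ independent of $n$. This requires the full delta-system stability machinery of \cite{ShKup}. I would simply cite their two theorems, the stability result and the supersaturation result for $r > 2s+2$, and perform only the sampling reduction of Step 2 to combine them into the quoted form.
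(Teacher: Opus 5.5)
Your proposal agrees with the paper, which gives no proof of this statement and simply imports it from two theorems of \cite{ShKup}. Your Step~2 averaging is a correct, standard way to combine such results: $\mE\, e(B)=e(A)\frac{m(m-1)}{|A|(|A|-1)}$ together with $|A|\ge m$ gives $e(A)\ge \delta_0|A|^2/n^s$ with no loss, provided you take $m$ to be the least integer exceeding $(1+\varepsilon)\binom{n-s-1}{r-s-1}$ so that every admissible $|A|$ is covered. The Step~1 heuristic is not load-bearing, but there you may discard only an $O(\eta/C)$-fraction of $A$ rather than half of it; otherwise bounding $|A'|$ by $(1+\varepsilon/2)\binom{n-s-1}{r-s-1}$ does not contradict $|A|>(1+\varepsilon)\binom{n-s-1}{r-s-1}$.
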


For the $s = 0$ case we prove the following auxiliary theorem.

\begin{theorem}
	\label{Spectr}
	Fix $r$, $n$ such that $2r \leq n$. Let $A$ be the set of vertices of $G(n,r,0)$ such that $|A| > \alpha(G(n,r,0)) = \binom{n-1}{r-1}$. Then
	$$
		e(A) \geq \frac{\binom{n-r-1}{r-1}}{2\binom{n-1}{r-1}}|A|\skob{|A| - \binom{n-1}{r-1}}.
	$$
\end{theorem}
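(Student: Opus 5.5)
We will prove Theorem~\ref{Spectr} with the expander-mixing (Hoffman-type) spectral argument for the Kneser graph. Write $N=\binom{n}{r}$, $d=\binom{n-r}{r}$ for the degree, and $\lambda_{\min}=-\binom{n-r-1}{r-1}$ for the smallest adjacency eigenvalue of $G(n,r,0)$; these are the standard eigenvalues $(-1)^j\binom{n-r-j}{r-j}$, and the least one occurs at $j=1$ when $n\ge 2r$. Let $x=\mathbf 1_A$ and let $A$ also denote the adjacency matrix $M$. Then $2e(A)=x^TMx$. Decompose $x=\frac{|A|}{N}\mathbf 1+y$ with $y\perp\mathbf 1$, so that
$$
x^TMx=d\frac{|A|^2}{N}+y^TMy\ \ge\ d\frac{|A|^2}{N}+\lambda_{\min}\|y\|^2 ,
\qquad \|y\|^2=|A|-\frac{|A|^2}{N}.
$$
This yields
$$
2e(A)\ \ge\ \frac{|A|^2}{N}(d-\lambda_{\min})+\lambda_{\min}|A| .
$$

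Next we simplify the coefficient. Using $d-\lambda_{\min}=\binom{n-r}{r}+\binom{n-r-1}{r-1}$, we check that $\frac{d-\lambda_{\min}}{N}=\frac{-\lambda_{\min}}{\binom{n-1}{r-1}}$. This is the Hoffman identity $\alpha=\frac{N(-\lambda_{\min})}{d-\lambda_{\min}}=\binom{n-1}{r-1}$, which can be verified directly since $\binom{n-r}{r}=\frac{n-r}{r}\binom{n-r-1}{r-1}$ gives $d-\lambda_{\min}=\frac{n}{r}\binom{n-r-1}{r-1}$, and $N=\frac nr\binom{n-1}{r-1}$. Substituting,
$$
2e(A)\ \ge\ \frac{\binom{n-r-1}{r-1}}{\binom{n-1}{r-1}}\,|A|^2-\binom{n-r-1}{r-1}|A|
=\frac{\binom{n-r-1}{r-1}}{\binom{n-1}{r-1}}\,|A|\Bigl(|A|-\binom{n-1}{r-1}\Bigr),
$$
which is exactly the claimed bound after dividing by $2$.

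The only nonroutine ingredient is the spectrum of the Kneser graph, namely that $\lambda_{\min}=-\binom{n-r-1}{r-1}$ for $n\ge2r$. We cite this as a classical fact: the Kneser graph is in the Johnson association scheme with eigenvalues $(-1)^j\binom{n-r-j}{r-j}$, and the absolute values decrease in $j$. For the boundary case $n=2r$ the graph is a perfect matching, $\lambda_{\min}=-1=-\binom{r-1}{r-1}$, so the formula still holds. Everything else is the algebraic identity above, which we expect to check without difficulty.
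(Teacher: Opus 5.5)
Your proof is correct and is essentially the paper's argument: the paper applies the $(N,d,\lambda)$-graph (expander-mixing) bound $2e(A)\ge \frac{d}{N}|A|^2-\frac{\lambda}{N}|A|(N-|A|)$ with $\lambda=\binom{n-r-1}{r-1}$, which is exactly your inequality with $\lambda_{\min}=-\lambda$, and then simplifies using $d=\frac{n-r}{r}\lambda$ and $N=\frac{n}{r}\binom{n-1}{r-1}$, as you do. Your write-up is slightly more self-contained, since it derives that inequality from the decomposition $x=\frac{|A|}{N}\mathbf{1}+y$ and uses only the one-sided bound on the smallest eigenvalue.
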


The other ingredient we need is the result on containers in $G(n,r,s)$. The notion of the container was first introduced by Saxton and Thomason and independently by Balogh, Morris and Samotij (see \cite{Cont1, Cont2}). Before we proceed, let us give the definition. Note that here we a slightly different definition that is more convenient for us.

\begin{definition}
    Consider a graph $G$. Let $\mathcal{C}$ be a collection of subsets of its vertices satisfying the following properties.
    \begin{enumerate}
        \item Each independent subset of $G$ is contained in at least one of the sets $C \in \mathcal{C}$.

        \item For each set $C \in \mathcal{C}$ its size does not exceed $K\alpha(G)$.

        \item The cardinality $|\mathcal{C}|$ does not exceed $f$.
    \end{enumerate}
    We call such a collection $(K, f)$-container.
\end{definition}

For example, $\mathcal{C} = \{G\}$ is a $(|G| / \alpha(G), 1)$-container and the set of all subsets of size $\alpha(G)$ is a $\skob{1, \binom{n}{\alpha(G)}}$-container.

We again need two auxiliary results here, one for the $r \geq 2s+3$ case and the second for $s = 0$ case. 

\begin{lemma}
    \label{mainL}
    Fix $s > 0$, $r \geq 2s+2$ and $\varepsilon > 0$. There exists $\delta = \delta_{\varepsilon, r, s} > 0$ such that there exists a $(1 + \varepsilon, \exp\skob{\delta n^s \ln^2 n})$-container in $G(n,r,s)$.
\end{lemma}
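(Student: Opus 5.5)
We will derive Lemma \ref{mainL} from the standard graph container algorithm (Kleitman--Winston / Sapozhenko style), using the supersaturation Theorem \ref{SuperSat} as the only graph-specific input. Throughout, $N=\binom{n}{r}$ denotes the number of vertices, and we use that $\alpha(G(n,r,s))=\binom{n-s-1}{r-s-1}=\Theta(n^{r-s-1})$ for large $n$ (the Frankl--Füredi range, which is also what underlies the formula for $\chi_f$ quoted above). One caveat: Theorem \ref{SuperSat} is stated for $r>2s+2$, so the boundary case $r=2s+2$ would need the corresponding supersaturation statement from \cite{ShKup}. We assume it holds there as well, or else restrict to $r\ge 2s+3$, which is all that Theorem \ref{main_gnrs} needs.

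\emph{Container algorithm.} Fix an independent set $I$ and build a fingerprint $S\subseteq I$ greedily. Start with $A_0=V(G)$. At step $j$, take the vertex $v\in A_j$ of maximum degree in $G[A_j]$, breaking ties by a fixed global order.

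If $v\in I$, put $v$ into $S$ and delete $v$ together with $N(v)$ from $A_j$. If $v\notin I$, delete only $v$. We stop once $|A_j|\le (1+\varepsilon)\alpha$, and the container is $C=A_{\mathrm{final}}\cup S$.

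The standard observation applies: the whole run is determined by $S$ alone, so $C=C(S)$ and $I\subseteq C(S)$. The number of containers is therefore at most the number of possible fingerprints, i.e.\ at most $\sum_{t\le T}\binom{N}{t}\le \exp(T\ln N)$, where $T$ bounds $|S|$. Since $\ln N=O(\ln n)$, it suffices to show $T=O(n^s\ln n)$.

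\emph{Bounding $|S|$.} While $|A_j|>(1+\varepsilon)\alpha$, Theorem \ref{SuperSat} gives $e(A_j)>\delta|A_j|^2/n^s$. Hence the maximum degree in $G[A_j]$ is at least $2\delta|A_j|/n^s$.

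Every time a vertex is put into $S$, at least a $2\delta/n^s$ fraction of the current set $A_j$ is removed. Deletions of vertices not in $I$ only shrink $A_j$ further. So after $t$ additions to $S$ we have $|A|\le N(1-2\delta n^{-s})^t\le N\exp(-2\delta t n^{-s})$. This falls below $(1+\varepsilon)\alpha$ once $t\ge \frac{n^s}{2\delta}\ln N$. Thus $T=O(n^s\ln n)$, giving $f=\exp(O(n^s\ln^2 n))$ with the constant depending on $\varepsilon,r,s$. It remains to check $|C|\le (1+\varepsilon)\alpha+T$. Since $T=o(\alpha)$ (because $n^s\ln n=o(n^{r-s-1})$ when $r\ge 2s+2$), running the algorithm with $\varepsilon/2$ in place of $\varepsilon$ yields size at most $(1+\varepsilon)\alpha$.

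\emph{Main obstacle.} The only delicate point is making the counting rigorous, i.e.\ that the algorithm's choices depend on $I$ only through membership of the chosen max-degree vertices. This is handled by the deterministic tie-breaking rule. The real content is the supersaturation input, which is assumed.
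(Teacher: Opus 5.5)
Your proposal is correct and follows the paper's proof essentially step by step. Both use the same max-degree greedy algorithm with fingerprint $U_i$ (your $S$) and the stopping threshold $(1+\varepsilon/2)\binom{n-s-1}{r-s-1}$, derive $|U_i|=O(n^s\ln n)$ from Theorem \ref{SuperSat}, and bound the number of containers by $\sum_{j}\binom{\binom{n}{r}}{j}$. Your caveat about the boundary case $r=2s+2$ is well taken and applies equally to the paper, which invokes Theorem \ref{SuperSat} (stated only for $r>2s+2$) without addressing that case.
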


\begin{lemma}
    	\label{mainLKG}
    	Fix $r$, $n$ such that $r \geq 10$, $n \geq 1000$, $3r \leq n$ and take any $1 \geq \varepsilon > \frac{1}{n}$. Then there exists a $\skob{1 + \varepsilon,2\skob{\frac{e\binom{n}{r}}{k}}^k}$-container in $G(n,r,0)$, where 
	$$
		k =  \flr{\frac{2(2+\varepsilon)\binom{n-1}{r-1}\ln \binom{n}{r}}{\varepsilon\binom{n-r-1}{r-1}}}.
	$$

    For instance, there exists a $\skob{2, 2\skob{\frac{e\binom{n}{r}}{k}}^k}$-container in $G(n,r,0)$, where 
	$$
		k =  \flr{\frac{6\binom{n-1}{r-1}\ln \binom{n}{r}}{\binom{n-r-1}{r-1}}}.
	$$
\end{lemma}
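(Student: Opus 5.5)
We construct the container family by the standard greedy (Kleitman--Winston type) procedure, with Theorem \ref{Spectr} supplying the supersaturation input. Write $N=\binom{n}{r}$, $\alpha=\binom{n-1}{r-1}$ and $d=\binom{n-r-1}{r-1}$.

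Fix an independent set $I$ and run the following deterministic algorithm. Keep an ``available'' set $S$, starting with $S=V(G)$. While $|S|>(1+\varepsilon)\alpha$, fix a canonical ordering of $S$ in which vertices appear in decreasing order of degree in $G[S]$. Take the first vertex $v$ of $I$ in this ordering. Remove from $S$ the vertex $v$, all vertices preceding $v$ in the ordering, and all neighbours of $v$ in $S$. Record $v$ as a fingerprint element. When the loop stops, output the container $C = S\cup F$, where $F$ is the fingerprint set.

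We claim $I\subseteq C$. Vertices preceding $v$ are not in $I$, by the choice of $v$. Neighbours of $v$ are not in $I$, since $I$ is independent. The vertex $v$ itself lies in $F$. Moreover, the final set $S$, and hence $C$, is determined by $F$ alone. This is because at each step the next fingerprint element is simply the earliest element of $F$ in the current ordering: all removed vertices come before it or are its neighbours.

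Next we bound the number of steps. Whenever $|S|>(1+\varepsilon)\alpha$, Theorem \ref{Spectr} gives
$$
e(S)\ge \frac{d}{2\alpha}|S|\bigl(|S|-\alpha\bigr)\ge \frac{d}{2\alpha}\cdot\frac{\varepsilon}{1+\varepsilon}|S|^2 .
$$
So the average degree in $G[S]$ is at least $\frac{\varepsilon d}{(1+\varepsilon)\alpha}|S|$. Let $v$ be at position $j$ in the ordering. If $j$ is large, we delete at least $j$ vertices. If $j$ is small, $v$ lies among the highest-degree vertices, and we delete many neighbours. To balance the two cases, let $t=|S|/2$. The top $t$ vertices have degree at least the median degree. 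The edges inside $G[S]$ give
$$
\sum \deg \ge \frac{2\varepsilon d}{(1+\varepsilon)\alpha}|S|^2 .
$$
Each vertex has degree at most $|S|$, so the top half of the vertices carry at least half of the degree sum, and hence a constant fraction of them have degree $\gtrsim \frac{\varepsilon d}{(2+\varepsilon)\alpha}|S|$. This is a Markov-type averaging argument, and the constant must be tuned to produce the factor $2(2+\varepsilon)/\varepsilon$ in $k$. In either case, each step removes at least a $\beta=\frac{\varepsilon d}{2(2+\varepsilon)\alpha}$ fraction of $S$. Therefore after $m$ steps we have $|S|\le N(1-\beta)^m\le Ne^{-\beta m}$. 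This drops below $(1+\varepsilon)\alpha$ once $m\ge \beta^{-1}\ln N$, which shows that $|F|\le k$.

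It remains to finish the counting. The number of containers is at most the number of possible fingerprint sets, namely
$$
\sum_{i\le k}\binom{N}{i}\le 2\left(\frac{eN}{k}\right)^k .
$$
The factor 2 covers the lower-order terms of the sum. It can be checked, since $k\le N/3$ in the given range (this uses $3r\le n$ and $r\ge 10$), or it can be obtained through the standard bound $\sum_{i\le k}\binom Ni\le (eN/k)^k$ plus slack. Each container has size at most $(1+\varepsilon)\alpha+k$. We must absorb the extra $k$: either shrink $\varepsilon$ slightly, or check that $k\le \varepsilon\alpha$ up to the adjustment, which uses $\varepsilon>1/n$ and $d/\alpha$ bounded below when $3r\le n$. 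The main obstacle is the degree-ordering step, that is, getting a clean uniform fraction $\beta$ with exactly the constant $2(2+\varepsilon)/\varepsilon$. I would first try the ordering-plus-average-degree argument above, choosing the threshold position $t$ optimally. The instance $\varepsilon=1$ then gives $k=\lfloor 6\alpha\ln N/d\rfloor$.
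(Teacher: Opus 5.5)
Your Kleitman--Winston variant, which fixes a degree ordering and batch-deletes everything before the first vertex of $I$, is a different procedure from the paper's. Its reconstruction-from-$F$ and counting parts are fine. However, the two steps that carry the quantitative content are not established.

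First, the uniform removal fraction, which you flag yourself. The sketch with $t=|S|/2$ does not close, because the edge count gives no control on the median degree of $G[S]$: a $K_{a,a}$ with $a<|S|/4$ plus isolated vertices has median degree $0$. Nor need the vertices of large degree form a constant fraction of $S$, since the relevant density $\gamma$ (defined below) can be exponentially small. What you need is that at least $\beta m$ vertices have degree at least $\beta m$, where $m=|S|$.

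Suppose you stop at threshold $(1+\varepsilon')\alpha$, and let $d_1\ge\dots\ge d_m$ be the degrees in $G[S]$. Theorem~\ref{Spectr} gives $\sum_i d_i>\gamma m^2$ with $\gamma=\frac{\varepsilon' d}{(1+\varepsilon')\alpha}$. Put $t=\lceil\beta m\rceil$. Then $\gamma m^2<(t-1)m+m\,d_t<\beta m^2+m\,d_t$, so $d_t>(\gamma-\beta)m$. A fingerprint at position $j$ deletes at least $\max(j,d_j+1)\ge\min(t,d_t+1)$ vertices, so $\beta=\gamma/2$ works. You must also say what the algorithm does if $I\cap S=\emptyset$ while $|S|$ is still above the threshold, e.g.\ output $C=F$.

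Second, absorbing $F$ into the container. Stopping at $(1+\varepsilon)\alpha$ allows $|C|=(1+\varepsilon)\alpha+k$, and your justification for absorbing $k$ is false. For $n=3r$ we have $d/\alpha=\prod_{i=1}^{r-1}\frac{n-r-i}{n-i}\le(2/3)^{r-1}$, so $d/\alpha$ is not bounded below. The fix is to stop at $(1+\varepsilon/2)\alpha$. Then $\gamma/2=\frac{\varepsilon d}{2(2+\varepsilon)\alpha}$ is exactly your $\beta$, with no slack, and you must prove $k\le\varepsilon\alpha/2$.

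In that inequality $\alpha$ cancels: it suffices that $d\ge 4(2+\varepsilon)\ln N/\varepsilon^2$. For $\varepsilon>1/n$ and $\ln N\le n$ this follows from $d\ge 12n^3$. That in turn holds because $r\ge 10$ and $3r\le n$ give $d\ge\binom{n-r-1}{9}$. This is where $r\ge 10$ is really used; it is the paper's estimate $k\le\alpha/(10n)$.

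For comparison, the paper re-selects a maximum-degree vertex after every single deletion, deleting it alone if it is not in $I$. So each fingerprint removes at least the average degree, a $\frac{\varepsilon d}{(2+\varepsilon)\alpha}$ fraction, with no factor $1/2$. That factor $2$ is then spent on the $+1$ coming from the crude bound $1\le|V_{i-1}|$. Your route pays the $1/2$ in the ordering tradeoff but avoids the $+1$, since just before the last step $|S|>(1+\varepsilon/2)\alpha$. Both give the same $k$.
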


\section{Proof of auxiliary results}

\subsection{Proof of Theorem \ref{Spectr}}

\begin{proof}
	Recall that $G(n,r,0)$ is an $(N, d, \lambda)$ graph with $N = \binom{n}{r}$, $d = \binom{n-r}{r}$ and $\lambda = \binom{n-r-1}{r-1}$. Note that $d = \frac{n-r}{r}\lambda$, $N = \frac{n}{r}\alpha$.

	We thus have
	$$
		2e(A) \geq \frac{d}{N}|A|^2 - \frac{\lambda}{N}|A|(N-|A|) = \frac{d+\lambda}{N}|A|^2 - \lambda|A| = \frac{\lambda}{\alpha}\skob{\frac{\frac{n-r}{r}+1}{\frac{n}{r}}|A|^2-|A|\alpha} = \frac{\lambda}{\alpha}|A|(|A| - \alpha),
	$$
	which is exactly what we need.
\end{proof}

\subsection{Proof of Lemma \ref{mainL}}

\begin{proof}[Proof of the lemma]
    Let us consider an arbitrary independent set $I$ in $G(n,r,s)$ and perform the following algorithm. Put $V_0 = V_{G(n,r,s)}$ and $U_0 = \varnothing$. At $i$-th step select the vertex $v_i$ with maximum degree in $G(n,r,s)[V_{i-1}]$. If $v_i \in I$, put $V_i = V_{i-1} \setminus (\{v_i\} \cup N(v_i))$ and $U_i = U_{i-1} \cup \{v_i\}$. If $v_i \notin I$, we put $V_i = V_{i-1} \setminus \{v_i\}$ and $U_i = U_{i-1}$. We stop whenever $|V_i| \leq (1+\varepsilon/2)\binom{n-s-1}{r-s-1}$. In the remaining part of the proof index $i$ denotes the last step of the algorithm (that is, $|V_i| \leq (1+\varepsilon/2)\binom{n-s-1}{r-s-1}$).

    Let us obtain some properties of $V_i$ and $U_i$. First of all, let us notice that $V_i$ is a function of $U_i$. Indeed, if two independent sets $I_1$ and $I_2$ are the same on $V_{G(n,r,s)} \setminus V_i$, the algorithm does just the same steps for them and thus the final sets will be the same. This means that the number of different $V_i \cup U_i$ over all $I$ is exactly the same as the number of different $U_i$. 

    The second thing to notice is that whenever we add a vertex to $U_{j+1}$, we throw out $\Delta(G(n,r,s)[V_j])+1$ vertices from $V_j$. This can be trivially bounded from below with the use of Theorem \ref{SuperSat}: 

    $$
        \Delta(G(n,r,s)[V_j]) \geq \frac{2e(G(n,r,s)[V_j])}{|V_j|} \geq \frac{2\delta|V_j|}{n^s}.
    $$

    We thus have 
    $$
    |V_{j+1}| \leq \skob{1-\frac{C}{n^s}}|V_j| \Rightarrow 1 \leq |V_{i-1}| \leq \skob{1-\frac{C}{n^s}}^{|U_i|-1}\binom{n}{r} \Rightarrow |U_i| \leq \Tilde{C}n^s\ln n
    $$
    for some $\Tilde{C} = \Tilde{C}(r,s,\varepsilon)$.

    Finally, the last thing to notice is that $V_i \cup U_i$ contains $I$.

    Using these three properties we can see that the collection of all possible $V_i \cup U_i$ is indeed a container, each set $V_i \cup U_i$ is of size at most $(1 + \varepsilon/2 + o_n(1))\binom{n-s-1}{r-s-1} \leq (1 + \varepsilon)\binom{n-s-1}{r-s-1}$ and there are at most 
    $$
        \sum_{j = 0}^{\flr{\Tilde{C}n^s\ln n}} \binom{\binom{n}{r}}{j} = \exp\skob{O(n^s\ln^2 n)}
    $$
    such sets, which is exactly what we need.
\end{proof}

\subsection{Proof of Lemma \ref{mainLKG}}

\begin{proof}
	Let us consider an arbitrary independent set $I$ in $G(n,r,0)$ and perform the following algorithm. Put $V_0 = V_{G(n,r,0)}$ and $U_0 = \varnothing$. At $i$-th step select the vertex $v_i$ with maximum degree in $G(n,r,0)[V_{i-1}]$. If $v_i \in I$, put $V_i = V_{i-1} \setminus (\{v_i\} \cup N(v_i))$ and $U_i = U_{i-1} \cup \{v_i\}$. If $v_i \notin I$, we put $V_i = V_{i-1} \setminus \{v_i\}$ and $U_i = U_{i-1}$. We stop whenever $|V_i| \leq (1+\varepsilon/2)\binom{n-1}{r-1}$. In the remaining part of the proof index $i$ denotes the last step of the algorithm (that is, $|V_i| \leq (1+\varepsilon/2)\binom{n-1}{r-1}$).
	
	Let us obtain some properties of $V_i$ and $U_i$. First of all, let us notice that $V_i$ is a function of $U_i$. Indeed, if two independent sets $I_1$ and $I_2$ are the same on $V_{G(n,r,0)} \setminus V_i$, the algorithm does just the same steps for them and thus the final sets will be the same. This means that the number of different $V_i \cup U_i$ over all $I$ is exactly the same as the number of different $U_i$. 
	
	The second thing to notice is that whenever we add a vertex to $U_{j+1}$, we throw out $\Delta(G(n,r,0)[V_j])+1$ vertices from $V_j$. This can be trivially bounded from below with the use of Theorem \ref{Spectr}: 
	
	$$
		\Delta(G(n,r,0)[V_j]) \geq \frac{2e(G(n,r,0)[V_j])}{|V_j|} \geq \binom{n-r-1}{r-1}\frac{|V_j| - \binom{n-1}{r-1}}{\binom{n-1}{r-1}} \geq \frac{\varepsilon / 2}{1 + \varepsilon / 2}\frac{\binom{n-r-1}{r-1}}{\binom{n-1}{r-1}}|V_j|.
	$$
	
	We thus have 
	\begin{multline*}
		|V_{j+1}| \leq \skob{1-\frac{\varepsilon}{2 + \varepsilon}\frac{\binom{n-r-1}{r-1}}{\binom{n-1}{r-1}}}|V_j| \Rightarrow \\ 
		\Rightarrow 1 \leq |V_{i-1}| \leq \skob{1-\frac{\varepsilon}{2 + \varepsilon}\frac{\binom{n-r-1}{r-1}}{\binom{n-1}{r-1}}}^{|U_i|-1}\binom{n}{r} \leq \\ 
		\leq  \exp\fig{-\frac{\varepsilon}{2 + \varepsilon}\frac{\binom{n-r-1}{r-1}}{\binom{n-1}{r-1}}(|U_i|-1)}\binom{n}{r} \Rightarrow \\
		\Rightarrow |U_i| \leq 1 + \frac{(2+\varepsilon)\binom{n-1}{r-1}\ln \binom{n}{r}}{\varepsilon\binom{n-r-1}{r-1}} \leq \frac{2(2+\varepsilon)\binom{n-1}{r-1}\ln \binom{n}{r}}{\varepsilon\binom{n-r-1}{r-1}}.
	\end{multline*}
	    
	Note that since $n \geq 3r$, we have 
	$$
		\frac{2(2+\varepsilon)\binom{n-1}{r-1}\ln \binom{n}{r}}{\varepsilon\binom{n-r-1}{r-1}} \leq  2\frac{3^{10} 9!\binom{n-1}{r-1}}{\varepsilon n^8} \leq 2\frac{3^{10}9!}{n^7}\binom{n-1}{r-1} \leq \frac{100}{n^2}\binom{n-1}{r-1} \leq \frac{1}{10n}\binom{n-1}{r-1}
	$$
	and thus $|U_i \cup V_i| \leq \skob{1 + \varepsilon/2 + \frac{1}{10n}}\binom{n-1}{r-1} \leq (1+\varepsilon)\binom{n-1}{r-1}$.
	
	Finally, the last thing to notice is that $V_i \cup U_i$ contains $I$.

	Now we put all of this together to obtain that the set of all possible $U_i \cup V_i$ is indeed a container, each $V_i \cup U_i$ has size at most $(1+\varepsilon)\binom{n-1}{r-1}$ and the number of distinct $V_i \cup U_i$ is at most
	$$
		S = \sum_{i = 0}^{k}\binom{\binom{n}{r}}{i}.
	$$
	Since $k \leq \binom{n-1}{r-1} = \frac{r}{n}\binom{n}{r} \leq \frac{1}{3}\binom{n}{r}$, we have 
	$$
		\binom{\binom{n}{r}}{i} \binom{\binom{n}{r}}{i+1}^{-1} = \frac{(i+1)}{\binom{n}{r}-i} \leq \frac{1}{2}
	$$
	for $i < k$ and thus
	$$
		S \leq 2\binom{\binom{n}{r}}{k} \leq 2\skob{\frac{\binom{n}{r}e}{k}}^{k}.
	$$ 
\end{proof}

\section{Lower bounds}

\subsection{Proof of Theorem \ref{main_gnrs}}

\begin{proof}
    Fix any $0 < \varepsilon < 1$ and let us take $\delta = (1-\varepsilon)\frac{r-2s-1}{8\ln 2\binom{r}{s+1}}$. The main idea is the following. We take $2q = 2\flr{\delta \binom{n}{s+1}\ln n}$ colors and randomly assign each vertex a list containing half of these colors. Assume that we managed to color our graph properly. Let $B_i$ be the set of vertices of color $i$. Then $B_i$ is contained in some set from (any) container. This means that we can assign each color $c$ a set $\varphi(c)$ from the container such that for each $i$ and each $v \in B_i$ we have at least one $C$ from the container such that $v \in C$ and $\varphi(i) = C$. It remains to prove that with positive probability for any $\varphi$ there exists at least one such $v$ that $v \notin \bigcup_{j = 1}^{q} \varphi(c_j)$, where $\{c_1, \dotsc, c_q\}$ is the list of colors assigned to $v$. We call $\varphi$ with this condition bad and all other $\varphi$ good.

    Let us now give a rigorous proof. Consider the $(2, \exp\{Cn^{s}\ln^2n\})$-container $\mathcal{C}$ from Lemma \ref{mainL}. Let us fix some function $\varphi$ that maps each color to some set $C \in \mathcal{C}$. Let us bound the probability that $\varphi$ is good. Let $\mathcal{C}(v)$ be the collection of all sets in $\mathcal{C}$ containing $v$. Let also $\lst(v)$ be the list of colors for $v$. Finally, put $\xi_v = \ind{\lst(v) \cap \varphi^{-1}(\mathcal{C}(v)) = \varnothing}$. Then $\varphi$ is good if and only if $\sum_{i =1}^{\binom{n}{r}}\xi_i = 0$. Since all $\xi_i$ are independent, we have
    $$
        \mP\skob{\sum_{i = 1}^{\binom{n}{r}}\xi_i = 0} = \prod_{i = 1}^{\binom{n}{r}}\mP(\xi_i = 0).
    $$

    Let us now bound $\mP(\xi_v = 0)$. In fact we will not bound all such probabilities, but only the ones with small $\varphi^{-1}(\mathcal{C}(v))$. Note that since each $C \in \mathcal{C}$ is of size at most $2\binom{n-s-1}{r-s-1}$, we have that 
    $$
    \sum_{v = 1}^{\binom{n}{r}}|\varphi^{-1}(\mathcal{C}(v))| = \sum_{c = 1}^{2q}|\varphi(c)| \leq 4q\binom{n-s-1}{r-s-1}.
    $$
    We thus obtain that there are at most $\frac{1}{2}\binom{n}{r}$ vertices such that 
    $$
    |\varphi^{-1}(\mathcal{C}(v))| \geq \frac{8q\binom{n-s-1}{r-s-1}}{\binom{n}{r}} \leq 8\delta\binom{r}{s+1}\ln n.
    $$

    Now consider any $v$ such that $|\varphi^{-1}(\mathcal{C}(v))| \leq 8\delta\binom{r}{s+1}\ln n$. We can bound $\mP(\xi_v = 0)$ for big enough $n$ as follows.
    \begin{multline*}
        \mP(\xi_v = 0) = 1- \mP(\xi_v = 1) = 1 -\frac{\binom{2q-|\varphi^{-1}(\mathcal{C}(v))|}{q}}{\binom{2q}{q}} = 1 -\prod_{i = 0}^{q-1}\frac{2q-|\varphi^{-1}(\mathcal{C}(v))| - i}{2q-i} \leq \\ \leq 1 - \skob{\frac{q -8\delta\binom{r}{s+1}\ln n}{2q}}^{8\delta\binom{r}{s+1}\ln n} \leq 1 - 2^{-(1+\varepsilon)8\delta\binom{r}{s+1}\ln n} = 1 - n^{-(1+\varepsilon)8\delta\binom{r}{s+1}\ln 2}. 
    \end{multline*}

    We thus have
    $$
         \mP\skob{\sum_{i = 1}^{\binom{n}{r}}\xi_i = 0} \leq \skob{1 - n^{-(1+\varepsilon)8\delta\binom{r}{s+1}\ln 2}}^{\frac{1}{2}\binom{n}{r}} \leq \exp\left\{-\frac{1}{2}\binom{n}{r}n^{-(1+\varepsilon)8\delta\binom{r}{s+1}\ln 2}\right\}.
    $$
    It remains to notice that taking union bound over all $\varphi$ yields 
    \begin{multline*}
        \mP(\text{exists at least } 1 \text{ good } \varphi) \leq (\exp\{Cn^s\ln^2 n\})^{2\delta n^{s+1}\ln n}\exp\left\{-\frac{1}{2}\binom{n}{r}n^{-(1+\varepsilon)8\delta\binom{r}{s+1}\ln 2}\right\} = \\ = \exp\left\{2\delta Cn^{2s+1}\ln^3 n - \frac{1}{2}\binom{n}{r}n^{-(1+\varepsilon)8\delta\binom{r}{s+1}\ln 2}\right\} = \\ = \exp\{2\delta Cn^{2s+1}\ln^3 n - \Omega(n^{r-(1-\varepsilon^2)(r-2s-1)})\} = o(1).
    \end{multline*}
    It remains to notice that we are done since $\varepsilon$ can be made arbitrarily small.
\end{proof}

\subsection{Proof of Theorem \ref{main_kgnr}}

\begin{proof}
    	Let us take $\delta = \frac{1}{120}$. The main idea is the following. We take $2q = 2\flr{\delta n \ln\skob{\frac{en}{r}}}$ colors and randomly assign each vertex a list containing half of these colors. Assume that we managed to color our graph properly. Let $B_i$ be the set of vertices of color $i$. Then $B_i$ is contained in some set from (any) container. This means that we can assign each color $c$ a set $\varphi(c)$ from the container such that for each $i$ and each $v \in B_i$ we have at least one $C$ from the container such that $v \in C$ and $\varphi(i) = C$. It remains to prove that with positive probability for any $\varphi$ there exists at least one such $v$ that $v \notin \bigcup_{j = 1}^{q} \varphi(c_j)$, where $\{c_1, \dotsc, c_q\}$ is the list of colors assigned to $v$. We call $\varphi$ with this condition bad and all other $\varphi$ good.

    	Let us now give a rigorous proof. Consider the container $\mathcal{C}$ from Lemma \ref{mainLKG}. Let us fix some function $\varphi$ that maps each color to some set $C \in \mathcal{C}$. Let us bound the probability that $\varphi$ is good. Let $\mathcal{C}(v)$ be the collection of all sets in $\mathcal{C}$ containing $v$. Let also $\lst(v)$ be the list of colors for $v$. Finally, put $\xi_v = \ind{\lst(v) \cap \varphi^{-1}(\mathcal{C}(v)) = \varnothing}$. Then $\varphi$ is good if and only if $\sum_{i =1}^{\binom{n}{r}}\xi_i = 0$. Since all $\xi_i$ are independent, we have
    	$$
		\mP\skob{\sum_{i = 1}^{\binom{n}{r}}\xi_i = 0} = \prod_{i = 1}^{\binom{n}{r}}\mP(\xi_i = 0).
    	$$

    	Let us now bound $\mP(\xi_v = 0)$. In fact we will not bound all such probabilities, but only the ones with small $\varphi^{-1}(\mathcal{C}(v))$. Note that since each $C \in \mathcal{C}$ is of size at most $2\binom{n-1}{r-1}$, we have that 
    	$$
    		\sum_{v = 1}^{\binom{n}{r}}|\varphi^{-1}(\mathcal{C}(v))| = \sum_{c = 1}^{2q}|\varphi(c)| \leq 4q\binom{n-1}{r-1}.
    	$$
    	We thus obtain that there are at most $\frac{1}{2}\binom{n}{r}$ vertices such that 
    	$$
    		|\varphi^{-1}(\mathcal{C}(v))| \geq \frac{8q\binom{n-1}{r-1}}{\binom{n}{r}} \leq 8\delta r \ln\skob{\frac{en}{r}}.
    	$$

	Now consider any $v$ such that $|\varphi^{-1}(\mathcal{C}(v))| \leq  \frac{8qr}{n}$. We can bound $\mP(\xi_v = 0)$ for big enough $n$ as follows.
    	\begin{multline*}
		\mP(\xi_v = 0) = 1- \mP(\xi_v = 1) = 1 -\frac{\binom{2q-|\varphi^{-1}(\mathcal{C}(v))|}{q}}{\binom{2q}{q}} = 1 -\prod_{i = 0}^{q-1}\frac{2q-|\varphi^{-1}(\mathcal{C}(v))| - i}{2q-i} \leq \\ \leq 1 - \skob{\frac{q - q/2}{2q}}^{ 8\delta r \ln\skob{\frac{en}{r}}} \leq 1 - 4^{-8\delta r \ln\skob{\frac{en}{r}}} \leq 1 - n^{-12\delta\frac{r\ln\skob{\frac{en}{r}}}{\ln n}}. 
    	\end{multline*}

    	We thus have
	$$
         		\mP\skob{\sum_{i = 1}^{\binom{n}{r}}\xi_i = 0} \leq \skob{1 - n^{-12\delta\frac{ r\ln\skob{\frac{en}{r}}}{\ln n}}}^{\frac{1}{2}\binom{n}{r}} \leq \exp\left\{-\frac{1}{2}\binom{n}{r}n^{-12\delta\frac{r\ln\skob{\frac{en}{r}}}{\ln n}}\right\}.
	$$
	It remains to notice that taking union bound over all $\varphi$ yields 
	\begin{multline*}
		\mP(\text{exists at least } 1 \text{ good } \varphi) \leq 2^{2q}\skob{\frac{e\binom{n}{r}}{k}}^{2kq}\exp\left\{-\frac{1}{2}\binom{n}{r}n^{-12\delta\frac{r\ln\skob{\frac{en}{r}}}{\ln n}}\right\} = \\ 
		= \exp\skob{2q\ln 2 + 2kq + 2kq\ln\binom{n}{r} - 2kq\ln k - \frac12\binom{n}{r}n^{-12\delta\frac{r\ln\skob{\frac{en}{r}}}{\ln n}}} \leq \\
		\leq \exp\skob{4kqr\ln\skob{\frac{en}{r}} - 2kq\ln k - \frac12\binom{n}{r}n^{-12\delta\frac{r\ln\skob{\frac{en}{r}}}{\ln n}}} \leq \\ 
		\leq \exp\skob{4kqr\ln\skob{\frac{en}{r}} - \frac12\binom{n}{r}n^{-12\delta\frac{r\ln\skob{\frac{en}{r}}}{\ln n}}} \leq \\
		\leq  \exp\skob{25\delta rn\frac{\binom{n-1}{r-1}\ln\skob{\frac{en}{r}}\ln \binom{n}{r}}{\binom{n-r-1}{r-1}}\ln\skob{\frac{en}{r}} - \frac12\binom{n}{r}n^{-12\delta\frac{r\ln\skob{\frac{en}{r}}}{\ln n}}} \leq \\
		\leq  \exp\skob{\binom{n}{r}\skob{25\delta \frac{r^3}{\binom{n-r-1}{r-1}}\ln^3\skob{\frac{en}{r}} - \frac12n^{-12\delta\frac{r\ln\skob{\frac{en}{r}}}{\ln n}}}} \leq \\
		\leq  \exp\skob{\binom{n}{r}\skob{25\delta \frac{r!r^2}{(n-2r)^{r-1}}\ln^3\skob{\frac{en}{r}} - \frac12n^{-12\delta\frac{r\ln\skob{\frac{en}{r}}}{\ln n}}}} \leq \\ 
		\leq  \exp\skob{\binom{n}{r}\skob{\frac{r^{r+3}}{n^{r-1}}\ln^3\skob{\frac{en}{r}} - \frac12n^{-12\delta\frac{r\ln\skob{\frac{en}{r}}}{\ln n}}}}.
    \end{multline*}
   
	That is, if inequality 
	$$
		-r+1+(r+3)\frac{\ln r}{\ln n} \leq 4 - r\skob{1-\frac{\ln r}{\ln n}} < -12\delta\frac{r\ln\skob{\frac{en}{r}}}{\ln n},
	$$
	holds we are done. A sufficient condition is
	$$
		r\skob{1-\frac{\ln r}{\ln n}-12\delta\frac{\ln\skob{\frac{en}{r}}}{\ln n}} = r\skob{1-12\delta-\frac{12\delta}{\ln n} - (1-12\delta)\frac{\ln r}{\ln n}} = (1-12\delta)r\skob{1 - \frac{\ln r}{\ln n}} - \frac{12\delta r}{\ln n} > 4.
	$$
	Using the fact that $4\ln n \leq r \leq n/20$ and $\delta = \frac{1}{120}$, we can see that
	$$
 		 (1-12\delta)r\skob{1 - \frac{\ln r}{\ln n}} - \frac{12\delta r}{\ln n} \geq 0.9r\frac{\ln 20}{\ln n} - \frac{0.1}{\ln n}r \geq \frac{\ln 20}{2\ln n}r \geq 2\ln 20 > 4
	$$ 
	and thus we are done.
\end{proof}

\section{Upper bounds}

\subsection{Proof of Theorem \ref{UB1}}

\begin{proof}
    Let $\mathbf{Q}$ be the distribution on independent sets that minimizes $\max_{v} \mQ^{-1}(v \in I)$. For each color $c$ let us assign it to a random set $I_c$ independently from everything else according to distribution $\mQ$. Now for any vertex $v$ and color $c$ we say that $c$ is good for $v$ if $v \in I_c$ and $c$ is in $\lst(v)$. Now we need to bound the probability that for each $v$ we have at least one good color. 
    
    Note that since $\mQ$ is taken to minimize $\max_{v} \mQ^{-1}(v \in I)$ and $I_c$ has distribution $\mQ$ we have $$\max_{v} \mQ^{-1}(v \in I_c) = \chi_f(G)$$ and thus
    $$
        \mQ(v \in I_c) \geq \frac{1}{\chi_f(G)}
    $$
    for any $v \in G$ and any color $c \in \lst(v)$. Now the probability that there is no good color for $v$ can be bounded from above as $(1 - \chi_f^{-1}(G))^{k} \leq \exp\{-k\chi_f^{-1}(G)\} \leq (en)^{-1}$ and it remains to apply the union bound over all $v \in G$.
\end{proof}

\subsection{Proof of Theorem \ref{UB2}}

\begin{proof}
    Fix some lists of size $k$ for all vertices.
    
    Take $l = \left\lceil\frac{\chi_f(G)}{\sqrt{\ln a(G)}}\right\rceil$ and $p = \frac{2l}{k}$. For each color in at least one of the lists we say that it is a reserve color with probability $p$ and main color otherwise. We do this independently for all colors. Let us now notice that for each vertex $v$ we have $\xi_v \sim Bin(k,p)$ reserve colors. 

    It is easy to prove that there exists a coloring such that $\xi_v \in [l, 3l]$ for all $v$ simultaneously. Indeed, we have $\mE \xi_v = 2l$ and thus due to Chernoff's bound
    $$
       1-\mP(\xi_v \in [l, 3l]) = \mP\skob{|\xi_v - \mE\xi_v| > \frac{1}{2}\mE\xi_v} \leq 2e^{-l/5} < 2n^{-2}.
    $$
    Taking union bound over all $v$ gives the desired result.

    So we split all the colors into main and reserve. For now, drop all reserve colors from the lists, so only main colors are left. Note that the size of each list is still at least 
    $$
        k - 3l = \lceil\chi_f(G)\skob{\ln a(G) + \ln \ln a(G)}\rceil.
    $$

    Now let us perform the proof of Theorem \ref{UB1} with slight modifications. Let $\mathbf{Q}$ be the distribution on independent sets that minimizes $\max_{v} \mQ^{-1}(v \in I)$. For each color $c$ let us assign it to a random set $I_c$ independently from everything else according to distribution $\mQ$. Now for any vertex $v$ and color $c$ we say that $c$ is good for $v$ if $v \in I_c$ and $c$ is in $\lst(v)$. The next thing to do is to bound the probability that for a vertex $v$ we have at least one good color $c$
    
    Note that since $\mQ$ is taken to minimize $\max_{v} \mQ^{-1}(v \in I)$ and $I_c$ has distribution $\mQ$ we have $$\max_{v} \mQ^{-1}(v \in I_c) = \chi_f(G)$$ and thus
    $$
        \mQ(v \in I_c) \geq \frac{1}{\chi_f(G)}
    $$
    for any $v \in G$ and any color $c \in \lst(v)$. Now the probability that there is no good color for $v$ can be bounded from above as 
    $$
    (1 - \chi_f^{-1}(G))^{k-3l} \leq \exp\{-(k-3l)\chi_f^{-1}(G)\} \leq \exp\{-\ln a(G) - \ln \ln a(G)\} \leq \frac{1}{a(G)\ln a(G)}.
    $$
    We can now obtain the bound on the expected number of vertices without good color. If $X$ is this number, then $\mE X \leq \frac{n}{a(G)\ln a(G)}$. We can now use Markov inequality to obtain
    $$
        \mQ(X > l) \leq \frac{\mE X}{l} \leq \frac{1}{\sqrt{\ln a(G)}} \leq \frac{1}{\sqrt{\ln 3}} < 1.
    $$
    We can thus choose the sets $I_c$ in such a way that at most $l$ vertices will remain uncolored. But we can then easily color them using the reserve colors (since each list contains at least $l$ of those). 
\end{proof}

\printbibliography

@article{ShKup,
    title = {On supersaturation for Erd\H os-S\'{o}s problem},
    author = {A. Kupavskii and Ya. Shubin},
    %journal = {arxiv},
    volume = {arxiv:2602.10292},
    year = {2026}
}

@article{Shu1,
  title     = "On the Minimal Number of Edges in Induced Subgraphs of Special Distance Graphs",
  author    = "Shubin, Ya.",
  journal   = "Math. Notes",
  volume    =  111,
  number    =  6,
  pages     = "961--969",
  year      =  2022
}

@article{Shu2,
  author  = {N. A. Dubinin and E. A. Neustroeva and A. M. Raigorodskii and Ya. K. Shubin},
  title   = {The Minimum Number of Cliques in Induced Subgraphs of Johnson Graphs},
  journal = {Mathematical Notes},
  volume  = {118},
  number  = {1},
  pages   = {80--94},
  year    = {2025}
  %doi     = {10.1134/S0001434625603454}
}

@article{FrFu,
 author = {P. Frankl and Z. F\"uredi},
 title = "Forbidding just one intersection",
 journal = "J. Combin. Theory Ser. A",
 volume = "39",
 pages = "160--176",
 year = 1985
}

@book{Codes,
    title = {The theory of error-correcting codes},
    year = {1977},
    author = {F. J. MacWilliams and N. J. A. Sloane},
    publisher = {North-Holland},
    note = {}
}

@article{FrW,
    title = {Intersection theorems with geometric consequences},
    year = {1981},
    journal = {Combinatorica},
    author = {P. Frankl and R. Wilson},
    volume = {1},
    pages = {357--368},
    note = {}
}

@article{RaiMeta,
    title = {Cliques and cycles in distance graphs and graphs of diameters},
    year = {2014},
    journal = {Discrete Geometry and Algebraic Combinatorics, AMS, Contemporary Mathematics}, 
    author = {A. M. Raigorodskii},
    volume = {625},
    pages = {93--109},
    note = {}
}

@article{Spectrum,
    title = {An algebraic approach to the association schemes of coding theory},
    year = {1973},
    journal = {Philips Res. Repts. Suppl.},
    author = {P. Delsart},
    volume = {10},
    note = {}
}

@article{Rai1,
    title = {Clique Chromatic Numbers of Intersection Graphs},
    year = {2019},
    journal = {Math. Notes},
    author = {D. A. Zakharov and A. M. Raigorodskii},
    volume = {105},
    pages = {137--139}
}

@article{ZChrom,
title = {Chromatic numbers of Kneser-type graphs},
journal = {J. Combin. Theory Ser. A},
volume = {172},
pages = {105188},
year = {2020},
issn = {0097-3165},
author = {D. Zakharov},
}

@article{Cont1,
  author  = {Saxton, David and Thomason, Andrew},
  title   = {Hypergraph containers},
  journal = {Inventiones Mathematicae},
  year    = {2015},
  volume  = {201},
  number  = {3},
  pages   = {925--992},
  doi     = {10.1007/s00222-014-0562-8},
  url     = {https://doi.org/10.1007/s00222-014-0562-8}
}

@article{Cont2,
  author  = {Balogh, J{\'o}zsef and Morris, Robert and Samotij, Wojciech},
  title   = {Independent sets in hypergraphs},
  journal = {Journal of the American Mathematical Society},
  year    = {2015},
  volume  = {28},
  number  = {3},
  pages   = {669--709},
  doi     = {10.1090/S0894-0347-2014-00816-X},
  url     = {https://doi.org/10.1090/S0894-0347-2014-00816-X}
}

@article{InKup,
title = {Two questions on Kneser colorings},
journal = {Discrete Mathematics},
volume = {349},
number = {2},
pages = {114842},
year = {2026},
issn = {0012-365X},
doi = {https://doi.org/10.1016/j.disc.2025.114842},
url = {https://www.sciencedirect.com/science/article/pii/S0012365X25004509},
author = {Eduard Inozemtsev and Andrey Kupavskii},
}

@article{BulKup,
title = {Choice number of Kneser graphs},
journal = {Discrete Mathematics},
volume = {345},
number = {11},
pages = {113097},
year = {2022},
issn = {0012-365X},
doi = {https://doi.org/10.1016/j.disc.2022.113097},
url = {https://www.sciencedirect.com/science/article/pii/S0012365X2200303X},
author = {Vera Bulankina and Andrey Kupavskii}
}

@article{SpLChr,
	title = {List Coloring of Random and Pseudo-Random Graphs},
	year = {1999},
	author = {Noga Alon and Michael Krivelevich and Benny Sudakov },
	journal = {Combinatorica},
	volume = {19},
	pages = {453--472}
}

@article{RaiMetaAlphaChrom,
    author = {A. M. Raigorodskii and P. S. Sinelnikov-Murylev},
    title = {Johnson graphs, their random subgraphs, and some of their extremal characteristics},
    journal = {Russian Math. Surveys},
    year = {2025},
    volume = {80:3},
	pages = {471--530}
}

@article{Codes2,
	author = {Etzion, Tuvi},
	title = {On the Nonexistence of Perfect Codes in the Johnson Scheme},
	journal = {SIAM Journal on Discrete Mathematics},
	volume = {9},
	number = {2},
	pages = {201-209},
	year = {1996}
}

@ARTICLE{MSp,
  author = {M. Koshelev},
  title = {Spectrum of Johnson graphs},
  journal = {Discrete Mathematics},
  year = {2023},
  volume = {346},
  number = {3},
  pages  = {113262},
  authorscopus = {true},
  authorwos = {true},
  authorvak = {true},
}

@ARTICLE{ClChr2,
  author = {M. Koshelev},
  title = {Lower bounds on the clique-chromatic numbers of some distance graphs},
  journal = {Moscow Journal of Combinatorics and Number Theory},
  year = {2021},
  volume = {10},
  number = {2},
  pages  = {141--148},
  language = {english},
  authorscopus = {true},
  authorvak = {true},
}

@ARTICLE{DAM,
  author = {A. Raigorodskii and M. Koshelev},
  title = {New bounds on clique-chromatic numbers of Johnson graphs},
  journal = {Discrete Applied Mathematics},
  year = {2020},
  volume = {283},
  pages  = {724--729},
  language = {english},
  authorscopus = {true},
  authorwos = {true},
  authorvak = {true},
}

@article{Lovasz1978,
  author  = {L{\'a}szl{\'o} Lov{\'a}sz},
  title   = {{Kneser}'s conjecture, chromatic number, and homotopy},
  journal = {Journal of Combinatorial Theory, Series A},
  volume  = {25},
  number  = {3},
  pages   = {319--324},
  year    = {1978},
  doi     = {10.1016/0097-3165(78)90022-5},
  url     = {https://doi.org/10.1016/0097-3165(78)90022-5}
}

@article{Vizing1976,
  author  = {V. G. Vizing},
  title   = {Vertex colorings with given colors},
  journal = {Diskret. Analiz},
  volume  = {29},
  pages   = {3--10},
  year    = {1976},
  note    = {In Russian}
}

@inproceedings{ERT1980,
  author    = {Paul Erd{\H{o}}s and Arthur L. Rubin and Herbert Taylor},
  title     = {Choosability in graphs},
  booktitle = {Proceedings of the West Coast Conference on Combinatorics,
               Graph Theory and Computing
               (Humboldt State University, Arcata, California, 1979)},
  series    = {Congressus Numerantium},
  volume    = {26},
  pages     = {125--157},
  publisher = {Utilitas Mathematica Publishing},
  address   = {Winnipeg},
  year      = {1980},
  url       = {https://www.renyi.hu/~p_erdos/1980-07.pdf}
}

@article{Alon1992,
  author  = {Alon, Noga},
  title   = {Choice numbers of graphs: a probabilistic approach},
  journal = {Combinatorics, Probability and Computing},
  year    = {1992},
  volume  = {1},
  number  = {2},
  pages   = {107--114},
  doi     = {10.1017/S0963548300000122}
}

@article{Thomassen1994,
  author  = {Thomassen, Carsten},
  title   = {Every planar graph is 5-choosable},
  journal = {Journal of Combinatorial Theory, Series B},
  year    = {1994},
  volume  = {62},
  number  = {1},
  pages   = {180--181},
  doi     = {10.1006/jctb.1994.1062}
}

@article{Galvin1995,
  author  = {Galvin, Fred},
  title   = {The list chromatic index of a bipartite multigraph},
  journal = {Journal of Combinatorial Theory, Series B},
  year    = {1995},
  volume  = {63},
  number  = {1},
  pages   = {153--158},
  doi     = {10.1006/jctb.1995.1011}
}

@article{Shabanov2011,
  author  = {Shabanov, Dmitry A.},
  title   = {On a generalization of {Rubin}'s theorem},
  journal = {Journal of Graph Theory},
  year    = {2011},
  volume  = {67},
  number  = {3},
  pages   = {226--234},
  doi     = {10.1002/jgt.20527},
  url     = {https://doi.org/10.1002/jgt.20527}
}

@article{WCvB,
author = {Cambie, Stijn and Cames van Batenburg, Wouter and Davies, Ewan and Kang, Ross J.},
title = {Packing list-colorings},
journal = {Random Structures \& Algorithms},
volume = {64},
number = {1},
pages = {62-93},
%keywords = {graph colouring, graph packing, independent transversals, list colouring, strong chromatic number},
doi = {https://doi.org/10.1002/rsa.21181},
url = {https://onlinelibrary.wiley.com/doi/abs/10.1002/rsa.21181},
eprint = {https://onlinelibrary.wiley.com/doi/pdf/10.1002/rsa.21181},
%abstract = {Abstract List coloring is an influential and classic topic in graph theory. We initiate the study of a natural strengthening of this problem, where instead of one list-coloring, we seek many in parallel. Our explorations have uncovered a potentially rich seam of interesting problems spanning chromatic graph theory. Given a \$\$ k \$\$-list-assignment \$\$ L \$\$ of a graph \$\$ G \$\$, which is the assignment of a list \$\$ L(v) \$\$ of \$\$ k \$\$ colors to each vertex \$\$ v\in V(G) \$\$, we study the existence of \$\$ k \$\$ pairwise-disjoint proper colorings of \$\$ G \$\$ using colors from these lists. We may refer to this as a list-packing. Using a mix of combinatorial and probabilistic methods, we set out some basic upper bounds on the smallest \$\$ k \$\$ for which such a list-packing is always guaranteed, in terms of the number of vertices, the degeneracy, the maximum degree, or the (list) chromatic number of \$\$ G \$\$. (The reader might already find it interesting that such a minimal \$\$ k \$\$ is well defined.) We also pursue a more focused study of the case when \$\$ G \$\$ is a bipartite graph. Our results do not yet rule out the tantalising prospect that the minimal \$\$ k \$\$ above is not too much larger than the list chromatic number. Our study has taken inspiration from study of the strong chromatic number, and we also explore generalizations of the problem above in the same spirit.},
year = {2024}
}

\end{document}